\documentclass[12pt]{amsart}
\usepackage{graphicx} 

\usepackage[utf8]{inputenc}

\usepackage{amsthm}
\usepackage{amsmath}
\usepackage{color}

\usepackage{tikz}									
\usetikzlibrary{matrix}
\usetikzlibrary{patterns}
\usetikzlibrary{matrix}
\usetikzlibrary{positioning}
\usetikzlibrary{decorations.pathmorphing}
\usetikzlibrary{cd}

\usepackage[all]{xy}
\usepackage[margin=1in]{geometry}
\usepackage{verbatim}
\usepackage{mathrsfs}
\usepackage{enumitem}
\usepackage[colorlinks=true,allcolors=purple]{hyperref}
\usepackage{amssymb}
\usepackage{mathtools}

\usetikzlibrary{decorations.markings}
\usetikzlibrary{decorations.pathreplacing}
\usetikzlibrary{positioning}
\usetikzlibrary{arrows,calc,patterns}
\usetikzlibrary{intersections,through}

\usepackage{pgfplots}\pgfplotsset{compat=1.13}

\theoremstyle{plain}

\newtheorem{theorem}{Theorem}
\newtheorem*{theorem*}{Theorem}

\newtheorem{proposition}[theorem]{Proposition}
\newtheorem{corollary}[theorem]{Corollary}
\newtheorem{lemma}[theorem]{Lemma}

\newtheorem*{maintheorem}{Theorem}

\theoremstyle{definition}

\newtheorem{example}[theorem]{Example}

\theoremstyle{remark}

\newtheorem{remark}[theorem]{Remark}

\numberwithin{theorem}{section}
\numberwithin{equation}{section}

\def\O{\mathcal{O}}

\def\T{\mathbb{T}}

\def\B{\mathbb{B}}

\def\Z{\mathbb{Z}}
\def\Bases{\mathcal{B}}

\def\max{{\rm max}}

\def\bv{{\bf v}}
\def\bw{{\bf w}}
\def\be{{\bf e}}
\def\bfn{{\bf f}}
\def\BG{\mathbb{B}[G]}
\def\BZn{\mathbb{B}[\Z_n]}
\def\BZp{\mathbb{B}[\Z_p]}

\title[Trop. subrepresentations of the boolean regular rep. in low dim. ]{Tropical subrepresentations of the boolean regular representation in low dimension}

\author{Steffen Marcus}
\address[Marcus]{The College of New Jersey\\
Ewing, NJ 08628\\ USA}
\email{marcuss@tcnj.edu}

\author{Cameron Phillips}
\address[Phillips]{The College of New Jersey\\
Ewing, NJ 08628\\ USA}
\email{phillic2@tcnj.edu}

\date{\today}

\begin{document}

\maketitle

\section{Introduction}
In \cite{GM2020}, Giansiracusa--Manaker develop the theory of group representations over a fixed idempotent semifield $\mathbb{S}$, i.e.\ \emph{tropical} representation theory. This continues efforts in recent years to pursue ideas in scheme theoretic tropicalization, including \cite{Frenk2013, GG2016, GG2017, MR2018, JUN2018, CGM2020, L2023, BB2019,fink2024} among many others. In this theory a linear representation $G\longrightarrow {\rm GL}(\mathbb{S}^n)$ of a group $G$ over $\mathbb{S}$ is described by monomial matrices, and a \emph{tropical subrepresentation} $T\subseteq \mathbb{S}^n$ is a $G$-invariant tropical linear space in $\mathbb{S}^n$. The constant-coefficient case features tropical representations over the boolean semifield $\B=\{-\infty,0\}$ consisting of the tropical additive and multiplicative identities. This setting highlights a unique interaction between group theory and matroid theory, since tropical representations $T\subseteq \mathbb{S}^n$ correspond to group homomorphisms $G\longrightarrow {\rm Aut}(M_T)$ to the automorphism group of the corresponding matroid. 

Given a finite group $G$ the \emph{boolean regular representation} $\BG$ is the tropicalization of the regular representation $\mathbb{C}[G]$ and can be described per usual by considering the $\B$ linear span
\[\BG = \left\{ \sum_{g\in G} c_g\be_g \right\}\]
of basis elements indexed by $G$ under the action determined by left-multiplication in $G$. As a central example \cite[Theorem~B]{GM2020} the authors begin the study of tropical subrepresentations of $\BG$ and their \emph{realizability}, that is, whether they arise as the tropicalization of a classical subrepresentation of $\mathbb{C}[G]$. In particular when $G\cong \Z_p$ is cyclic of prime order they prove that there is only one realizable subrepresentation in each dimension $1\leq d \leq p$ corresponding to the uniform matroid $U_{d,p}$. In dimension 2 they show that that there are no other non-realizable subrepresentations of $\BZp$ for a specified infinite collection of primes. They conjecture \cite[Conjecture~4.1.6]{GM2020} that there is only one two-dimensional subrepresentation of $\BZn$ if and only if n is prime.

The goal of this paper is to continue the classification of tropical subrepresentations of $\BG$. Our main result in two-dimensions classifies all two-dimensional tropical subrepresentations for any finite group $G$ and provides a direct correspondence with the proper subgroups of $G$. 
\begin{maintheorem}[Theorem~\ref{thm:main}] Let $G$ be a finite group. Two-dimensional tropical subrepresentations of $\BG$ correspond bijectively to proper subgroups $H\subset G$. The set of bases of the corresponding matroids are explicitly presented as a union of $G$-orbits 
    \[\bigcup_{g\in G-H}\{\{a,ag\}|a\in G\}\]
    for the induced $G$-action on subsets of $G$ size 2.
\end{maintheorem}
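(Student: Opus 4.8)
The plan is to reduce the classification to a purely combinatorial statement about $G$-invariant partitions of $G$. First I would recall from \cite{GM2020} the dictionary: a tropical linear space $T\subseteq \B^n$ is equivalent data to a matroid $M_T$ on the ground set $\{1,\dots,n\}$, with $\dim T$ equal to the rank of $M_T$, and $T$ is a tropical subrepresentation of $\BG$ exactly when the matroid $M_T$, regarded on the ground set $G$, is invariant under the left-regular action of $G$ on itself. Hence a two-dimensional tropical subrepresentation is precisely a rank-$2$ matroid on $G$ invariant under left translation. Since $G$ acts transitively on itself, such a matroid is automatically loopless: a single loop would force every element of $G$ to be a loop, collapsing the rank to $0$. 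A loopless rank-$2$ matroid is determined by its partition into rank-$1$ flats (parallel classes), the rank being $2$ rather than $1$ exactly when this partition has at least two blocks; and $M_T$ is $G$-invariant if and only if the partition is.

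The heart of the argument is then the group-theoretic claim that the partitions of $G$ invariant under left translation are exactly the partitions $\{aH : a\in G\}$ into left cosets of a subgroup $H\le G$. One direction is clear, since $g\cdot(aH)=(ga)H$ is again a left coset. For the converse, let $\mathcal P$ be an invariant partition and let $H$ be the block containing the identity $e$; for every $h\in H$ the block $h^{-1}H$ contains $h^{-1}h=e$ and therefore equals $H$, so $H$ is stable under left multiplication by $h^{-1}$ for each $h\in H$, which together with $e\in H$ readily forces $H$ to be a subgroup, and any other block equals $aH$ for any one of its members $a$ by the same reasoning. Rank $2$ corresponds to a partition with at least two blocks, i.e.\ $[G:H]\ge 2$, i.e.\ $H$ a proper subgroup, which yields the asserted bijection.

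It remains to identify the bases of the matroid attached to a proper subgroup $H$. A $2$-subset $\{a,b\}$ is a basis exactly when $a$ and $b$ lie in distinct left cosets, that is $a^{-1}b\notin H$; writing $g=a^{-1}b$ this describes the set $\{\{a,ag\} : a\in G,\ g\in G-H\}$, which is precisely the displayed union of $G$-orbits under the induced action on $2$-subsets (the indices $g$ and $g^{-1}$ name the same orbit, and $G-H$ is stable under inversion since $H$ is a subgroup). As a sanity check this recovers $U_{2,|G|}$ from the trivial subgroup and, for $G\cong\Z_n$, gives a unique two-dimensional subrepresentation exactly when $\Z_n$ has no nontrivial proper subgroup, i.e.\ when $n$ is prime, matching \cite[Conjecture~4.1.6]{GM2020}.

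The step I expect to require the most care is the first one: extracting from the formalism of \cite{GM2020} that a \emph{two-dimensional tropical subrepresentation of $\BG$} translates faithfully into a \emph{rank-$2$ matroid on $G$ invariant under the left-regular action}, together with the looplessness reduction and the verification that distinct invariant matroids give genuinely distinct subrepresentations, so that the correspondence is a bijection rather than merely a surjection. Once that dictionary is in place, the remaining content is the coset lemma above and the bookkeeping of bases.
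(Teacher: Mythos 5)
Your argument is correct, but it takes a genuinely different route from the one in the paper. The paper works directly with the orbits $f_g$ and the basis exchange axiom: the key tool is Proposition~\ref{prop:orbitgh} ($f_{gh}\subseteq\Bases$ implies $f_g\subseteq\Bases$ or $f_h\subseteq\Bases$), proved by exchanging between the disjoint orbit representatives $\{e,gh\}$ and $\{g^{-1},h\}$, after which the complement $S^C$ of the index set of a matroidal union $f_S$ is shown to be a subgroup via the one-step subgroup test; the forward direction checks exchange for $f_{G-H}$ by hand. You instead invoke the structure theory of rank-$2$ matroids -- looplessness forced by transitivity, the bijection between loopless rank-$2$ matroids and partitions into parallel classes with at least two blocks, and equivariance of that bijection -- and then prove the classical fact that the left-translation-invariant partitions of $G$ are exactly the left coset partitions. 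Both arguments are elementary and complete; the two small points you should make explicit are (i) the converse direction of the matroid--partition dictionary, namely that the transversal $2$-subsets of any partition with at least two blocks do satisfy basis exchange (this is the standard rank function $r(A)=\min(2,\#\{\text{blocks meeting }A\})$, and it is the step corresponding to the first half of the paper's proof), and (ii) that parallelism is transitive, which is where submodularity quietly enters. What your approach buys is conceptual clarity in dimension $2$ -- the result becomes the statement that block systems of the regular action are coset partitions -- and it cleanly packages the realizable/uniform special cases. What the paper's approach buys is that its key lemma (the product property of orbits under basis exchange) has direct analogues in rank $3$ (Propositions~\ref{prop:dim3exchange} and \ref{prop:gg2}) and so sets up the later sections, whereas the parallel-class description of a matroid is special to rank $2$ and does not extend.
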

This theorem is proven in Section~\ref{sec:2d}. It completely solves the problem in dimension 2 and confirms conjecture \cite[Conjecture~4.1.6]{GM2020} as the special case where $G\cong\Z_p$. The union of $G$-orbits explicitly describes the set of bases for each matroid on the ground set $G$ corresponding to a tropical subrepresentation. Our proof generalizes the methods in \cite[Section~4]{GM2020} to arbitrary groups, is surprisingly elementary, and demonstrates a direct interplay between the basis exchange axiom and the group structure of $G$. In higher dimensions, this interplay and the combinatorics involved present a more significant challenge. 

To this end, in Section~\ref{sec:3d} we make progress on the dimension 3 case. Our first result in dimension 3 shows that for any subgroup $H\subseteq G$ of index larger than 2, there exists a tropical subrepresentation of $\BG$ with bases of its corresponding matroid identified in a similar way to Theorem~\ref{thm:main}.

\begin{maintheorem}[Theorem~\ref{thm:dim3subgroups}] 
Let $G$ be a finite group, and let $H$ be a subgroup of $G$ with $[G:H]>2$. There exists a three-dimensional tropical subrepresentations of $\BG$ for which the set of bases of the corresponding matroid is explicitly presented as a union of $G$-orbits 
    \[\bigcup_{g,h,g^{-1}h\in G-H} \{\{a,ag,ah\}|a\in G\}\]
    for the induced $G$-action on subsets of $G$ size 3.
\end{maintheorem}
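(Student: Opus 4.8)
The plan is to realize the desired object as the tropical linear space $T_M$ of an explicit rank-$3$ matroid $M$ on the ground set $G$, and then invoke the dictionary of \cite{GM2020} identifying $d$-dimensional tropical subrepresentations of $\BG$ with rank-$d$ matroids on $G$ equipped with a $G$-action by automorphisms coming from the left-regular action. The first step is a reformulation of the indexing set. For a triple $\{a,ag,ah\}$ the three pairwise left quotients are $a^{-1}(ag)=g$, $a^{-1}(ah)=h$, and $(ag)^{-1}(ah)=g^{-1}h$; requiring all three to lie outside $H$ is exactly the condition that $a$, $ag$, $ah$ lie in three pairwise distinct left cosets of $H$. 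Since this condition is symmetric in the three elements, the union in the statement is simply
\[
  \mathcal{B}\;=\;\bigl\{\,\{a,b,c\}\subseteq G \;\bigm|\; aH,\ bH,\ cH \text{ are pairwise distinct}\,\bigr\}.
\]
The hypothesis $[G:H]>2$ is precisely what makes $\mathcal{B}$ nonempty (and forces $|G|\geq 3$): with at least three cosets available one may pick one element from each of three of them.

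The second step is to check that $\mathcal{B}$ is the basis set of a matroid of rank $3$. Writing the left-coset partition $G=\bigsqcup_{i=1}^{k}g_iH$ with $k=[G:H]\geq 3$, the set $\mathcal{B}$ is exactly the collection of bases of the rank-$3$ truncation of the partition matroid $\bigoplus_{i=1}^{k}U_{1,|H|}$, whose independent sets of size $3$ are the $3$-subsets of $G$ meeting three distinct blocks. Direct sums and rank-truncations of matroids are matroids, and since $k>2$ the truncation genuinely has rank $3$; this produces the matroid $M$ on $G$ with basis set $\mathcal{B}$. (If one prefers a self-contained argument, the basis exchange axiom for $\mathcal{B}$ can be verified directly: given $\{a,b,c\},\{a',b',c'\}\in\mathcal{B}$ and an element of the first to be deleted, a suitable replacement is any element of the second lying in a coset not yet represented among the remaining two; tracking which cosets occur is where the group law enters, and the verification is elementary, though this is the step where the combinatorics is most visibly at play.)

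The third step is $G$-invariance and the conclusion. Left translation by any $x\in G$ sends $aH$ to $xaH$, so it permutes the left cosets of $H$ --- this is the action of $G$ on $G/H$ --- and therefore preserves the coset partition and hence $M$. Thus left translation defines a homomorphism $G\to\Aut(M)$, which is injective because the regular action of $G$ on itself is faithful. By \cite{GM2020}, $T_M\subseteq\BG$ is then a tropical subrepresentation of the boolean regular representation; it is three-dimensional because $M$ has rank $3$, and by construction the basis set of its matroid is exactly $\mathcal{B}$, i.e.\ the union displayed in the statement. I do not expect a serious obstacle: the only care needed is in the first step's translation between the generator description of the triples and the symmetric coset description, and in matching conventions so that rank $3$ corresponds to dimension $3$ under the dictionary of \cite{GM2020}.
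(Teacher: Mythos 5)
Your proof is correct, and it takes a genuinely different route from the paper. You first observe that the condition $g,h,g^{-1}h\in G-H$ is equivalent to the three elements of $\{a,ag,ah\}$ lying in pairwise distinct left cosets of $H$ (using $a^{-1}b\notin H\iff aH\neq bH$ and closure of $H$ under inversion), and then identify the resulting family as the basis set of the rank-$3$ truncation of the partition matroid $\bigoplus_{i=1}^{k}U_{1,|H|}$ attached to the coset partition, with $k=[G:H]\geq 3$ guaranteeing the truncation has rank exactly $3$ and is nonempty; $G$-invariance is immediate because left translation permutes left cosets. The paper instead verifies the strong basis exchange axiom directly by contradiction, splitting into the cases where the two bases share one element or are disjoint and deriving $y_2^{-1}y_3\in H$ (or the analogue) from the closure of $H$ under products and inverses. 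Your argument is shorter and more conceptual: it reduces matroid-ness to the standard facts that direct sums and truncations of matroids are matroids, it names the matroid explicitly (which could be useful for the realizability questions the authors defer), and it visibly generalizes to give a $d$-dimensional subrepresentation whenever $[G:H]\geq d$, a statement the paper does not record. What the paper's hands-on verification buys is self-containment and practice with the orbit/basis-exchange calculus (Propositions~\ref{prop:dim3orbit}--\ref{prop:gg2}) that carries the rest of Section~\ref{sec:3d}, where no such clean structural description is available. The only point to tighten in your write-up is that the parenthetical ``direct verification'' of exchange is left as a sketch, but since the truncation argument is complete that sketch is not load-bearing.
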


In contrast to dimension 2, this is not an equivalence -- the combinatorics in higher dimension seem to allow for a wider collection of tropical subrepresentations. In Section~\ref{sec:3dcyclic} we investigate this further in the cyclic case. Our first result for cyclic groups identifies a common subset of the set of bases for every matroid corresponding to a three-dimensional subrepresentation of $\B[\Z_n]$. Denote by $\Z_n^\times$ the set of units in $\Z_n$.

\begin{maintheorem}[Theorem~\ref{thm:3d}] Let $G=\Z_n$ be a finite cyclic group. The set of bases $\Bases$ of any matroid corresponding to a three-dimensional subrepresentation contains the union 
    \[\bigcup_{u\in \Z_n^\times} \{ \{a,u+a,2u+a\} | a \in \Z_n \} \]
    of orbits for the induced $\Z_n$-action on subsets of $\Z_n$ of size 3.
\end{maintheorem}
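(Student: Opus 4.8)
The plan is to reduce the statement to a single assertion about an arbitrary rank-$3$ translation-invariant matroid on $\Z_n$ --- that the triple of \emph{consecutive} elements $\{0,1,2\}$ is a basis --- and then prove that assertion in a few lines. Let $M$ be the matroid attached to a three-dimensional subrepresentation of $\BZn$; by the subrepresentation--matroid correspondence, $\mathrm{rk}(M)=3$ and every translation $t_c\colon x\mapsto x+c$ is an automorphism of $M$. Translating by $-a$ shows $\{a,a+u,a+2u\}\in\Bases$ iff $\{0,u,2u\}\in\Bases$, so it suffices to treat the progressions through $0$. For a unit $u\in\Z_n^\times$ the multiplication map $\mu_u\colon x\mapsto ux$ is a bijection of $\Z_n$ with $\mu_u(\{0,1,2\})=\{0,u,2u\}$; let $M_u$ be the matroid obtained from $M$ by relabeling the ground set along $\mu_u$, so that $B\in\Bases(M_u)$ iff $uB\in\Bases(M)$. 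Then $\mathrm{rk}(M_u)=3$, and $M_u$ is again translation-invariant, since $B+c\in\Bases(M_u)$ iff $uB+uc\in\Bases(M)$ iff $uB\in\Bases(M)$ (as $t_{uc}$ is an automorphism of $M$) iff $B\in\Bases(M_u)$. Thus the theorem will follow from the claim: \emph{every rank-$3$ matroid $N$ on $\Z_n$ all of whose translations are automorphisms has $\{0,1,2\}$ as a basis.}

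To prove the claim I would first record two normalizations forced by the symmetry. If $0$ were a loop of $N$, then each $c=t_c(0)$ would be a loop and $\mathrm{rk}(N)=0$, so $N$ is loopless; and if $0$ and $1$ were parallel, then $c$ and $c+1$ would be parallel for every $c$, putting all of $\Z_n$ into a single parallel class and forcing $\mathrm{rk}(N)=1$, so $\{0,1\}$ is independent and the flat $F:=\overline{\{0,1\}}$ (the closure of $\{0,1\}$) has rank $2$. Now suppose $\{0,1,2\}$ is not a basis; since $\{0,1\}$ is independent and $\mathrm{rk}(N)=3$ this forces $2\in F$ (otherwise $\{0,1,2\}$ would be independent of size $3$, hence a basis). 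The point of working with three consecutive elements is that the automorphism $t_1$ then carries $F$ back inside itself: $t_1(F)=t_1(\overline{\{0,1\}})=\overline{\{1,2\}}$, and since $\{1,2\}\subseteq F$ and $F$ is a flat, $\overline{\{1,2\}}\subseteq\overline{F}=F$, so $t_1(F)\subseteq F$; as $t_1$ is a bijection of a finite set, $t_1(F)=F$. Hence $F$ is closed under adding $1$, and being nonempty it equals $\Z_n$, giving $\mathrm{rk}(F)=3$ and contradicting $\mathrm{rk}(F)=2$. This proves the claim, and with the reduction the theorem.

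I do not anticipate a real obstacle: the proof is of the same elementary flavor as that of Theorem~\ref{thm:main}, again exploiting the interaction between a matroid axiom (here, that flats are closed and rank-preserving) and the cyclic group structure. The one point that will need care is the reduction, where one must verify that relabeling by $\mu_u$ preserves translation-invariance; this is exactly where the hypothesis that $u$ be a \emph{unit} enters, and the hypothesis is genuinely needed, since for a non-unit $d$ the set $\{0,d,2d\}$ can fail to be a basis --- it may be a proper subgroup occurring as a rank-$2$ flat of some translation-invariant rank-$3$ matroid on $\Z_n$. One can also produce the final contradiction from the basis exchange axiom (extend $\{0,1\}$ to a basis $\{0,1,x\}$, translate to the basis $\{1,2,x+1\}$, and exchange $2$ into the first basis), but the closure formulation above is cleaner.
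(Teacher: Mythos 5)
Your proof is correct, and it takes a genuinely different route from the paper's. The paper works entirely at the level of orbit indices: it forms the set $I$ of integers $k$ for which some orbit $f_{ku,lu}$ lies in $\Bases$, bounds the least element $s$ of $I$ via $3s\le n$ (Lemma~\ref{lem:smallestindex}), and then runs a division-algorithm descent through the orbit-level exchange properties (Proposition~\ref{prop:dim3orbit}, Corollary~\ref{prop:dim3sumdiff}, Proposition~\ref{prop:gg2}) to force $s=1$, after which $f_{u,2u}\subseteq\Bases$ follows from Proposition~\ref{prop:gg2}. You instead relabel the ground set by the unit $u$ to reduce to the single triple $\{0,1,2\}$ and then argue with the closure operator: translation-invariance rules out loops and nontrivial parallel classes, so $\{0,1\}$ is independent with rank-$2$ flat $F=\overline{\{0,1\}}$; if $\{0,1,2\}$ were not a basis then $2\in F$, whence $t_1(F)=\overline{\{1,2\}}\subseteq F$, so $t_1(F)=F$ by finiteness, so $F=\Z_n$, contradicting that $F$ has rank $2$. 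All the steps check out --- the relabeled matroid $M_u$ is translation-invariant precisely because $u$ is a unit, automorphisms commute with closure, and an independent $3$-set in a rank-$3$ matroid is a basis --- and the argument is considerably shorter, avoiding Lemma~\ref{lem:smallestindex} and the minimal-element descent entirely. What it buys is conceptual clarity (the real content is that a proper flat cannot be invariant under a transitive translation, which is exactly where the unit hypothesis enters, matching your remark about non-units $d$ and the subgroup matroids of Theorem~\ref{thm:dim3subgroups}); what the paper's longer route buys is a toolkit of reusable orbit-level lemmas (notably Proposition~\ref{prop:gg2} and Lemma~\ref{lem:dim3n/2}) that are needed again in Theorem~\ref{thm:3d2} and the examples. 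Your parenthetical alternative ending via basis exchange is sketchier than stated (exchanging $2$ into $\{0,1,x\}$ need not produce $\{0,1,2\}$ on the nose), but the closure version you actually rely on is sound.
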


Our second result for cyclic groups provides the set of bases for a number of tropical subrepresentations of $\BZn$ not corresponding to subgroups. Denote by ${[n]}\choose{3}$ the set of subsets of $[n]=\{1,\ldots,n\}$ of size 3. In Theorem~\ref{thm:3d2} these sets of bases take the form, for any unit $u\in\Z_n^\times$,
\[{{[n]}\choose{3}}-\left\{\{a,u+a,ku+a\}|a\in\Z_n\right\}\]
for various values of $k$ determined by the combinatorics of the problem. Here we are simply excluding one specific orbit of the induced $\Z_n$-action.
For $n>5$ these are the bases of non-uniform matroids. This confirms that $\B[\Z_n]$ contains non-uniform three-dimensional tropical subrepresentations for all $n>5$, even for $n$ a prime, which directly contrasts with the two-dimensional case. 

In the boolean setting, tropical representation theory can be recast as a question about group actions on matroids, and this is our approach. Our attention is restricted to the classification question as it provides a facinating playground in which group theory and matroid theory seem to interact deeply with interesting combinatorics. Realizability of the subrepresentations constructed here, a continuation of the classification in dimensions 3 and higher, and a comparison with the classical case are all ripe questions that we leave for future work.

\subsection*{Acknowledgements} We are grateful to Andrew Clifford, Noah Giansiracusa, and Thomas Hagedorn for interesting and helpful discussions benefiting this project.

\section{Background}

In this section we briefly summarize the necessary theory from \cite{GM2020}. For more detailed background, we refer the reader to this paper, as well as \cite{CGM2020, GG2017}. We assume a familiarity with matroid theory, see \cite{Oxley2011} for further reading.

\subsection{The tropical booleans}

In \cite{GM2020} the finite-dimensional representation theory of groups is developed over an arbitrary idempotent semifield $(\mathbb{S},+,\cdot,0,1)$, meaning additive inverses may not exist and $s+s=s$ for all $s\in\mathbb{S}$. This is an appropriate generalized setting for tropical algebra as the \textit{tropical numbers} \[\T=(\mathbb{R}\cup\{-\infty\},\oplus,\odot, -\infty, 0)\] form an idempotent semifield consisting of the set $\mathbb{R}\cup\{-\infty\}$ under the operations $a\oplus b=\max(a,b)$ and $a\odot b=a+b$. Note that $x\oplus y=\max(x,y)\ge x$ for all $x,y\in \T$ and, aside from $-\infty$, no element has an additive inverse. 

The boolean semifield \[\B=\{0,1\}\subseteq\mathbb{S}\] consists of the additive and multiplicative identity elements alone and forms the initial object of the category. We will restrict our attention to the booleans throughout.

\subsection{Exterior powers}

Let $\B^n$ be a free $\B$ module with basis elements  $\be_1,\ldots,\be_n$. In \cite{GG2017}, an exterior algebra formalism for semirings introduces free modules $\bigwedge^d\B^n$ for $d=1,\ldots,n$ given by a basis of the form $\be_{i_1}\wedge\cdots\wedge \be_{i_d}$ where each $\be_{i_k}$ are basis elements of $\B^n$. Recall the two standard properties or wedge product: $\bv\wedge \bv=0$ and $\bv\wedge \bw=-\bw\wedge \bv$.  
In this context the second property becomes $\bv\wedge \bw=\bw\wedge\bv$.

For any positive integer $n$, set $[n]=\{1,\ldots, n\}$. For any finite set $A$ denote by ${{A}\choose{d}}$ the set of subsets of $A$ of cardinality $d$. The basis of $\bigwedge^d\B^n$ corresponds to the set ${{[n]}\choose{d}}$ by taking indices, and vectors in $\bigwedge^d\B^n$ simply indicate a subset of the basis vectors of $\bigwedge^d\B^n$. This gives a correspondence betwen vectors $v\in \bigwedge^d\B^n$ and subsets of ${[n]}\choose{d}$. More generally given any finite group $G$ as an index set, the same correspondence can be formed between vectors $v\in \bigwedge^d\B^{|G|}$ and subsets of ${G}\choose{d}$. Moreover, the effect of projectivization on $\bigwedge^d\B^n$ disappears since $\B^*=\B-0=\{1\}$, so $\mathbb{P}(\bigwedge^d\B^n)\cong \bigwedge^d\B^n$.

\subsection{Tropical Linear Spaces and Matroids}\label{sec:boolwedge}

A nonzero vector 
\[\bv=\sum_{I\in{{[n]}\choose{d}}} v_I\be_I\in \bigwedge^d\B^n\]
is a \emph{tropical Pl\"{u}cker vector over $\mathbb{B}$} of rank $d$ if it satisfies the \emph{Tropical Pl\"ucker Relations} over $\mathbb{B}$, that is, for all $A\in{{[n]}\choose{d+1}},\ B\in{{[n]}\choose{d-1}},\ j\in A-B$,
\[
\sum_{i\in A-B}v_{A-\{i\}}v_{B\cup\{i\}}=\sum_{i\in A-B-j}v_{A-\{i\}}v_{B\cup\{i\}}.
\]
Formally, a vector will satisfy the tropical Pl\"ucker relations if whenever $v_{A-i}v_{B+i}=1$ for some $i$, then for some $j\ne i$ we must have $v_{A-j}v_{B+j}=1$. Indeed, tropical Pl\"{u}cker vectors in $\bigwedge^d\B^n$ are precisely the vectors which correspond to choices of subsets in ${[n]}\choose{d}$ that satisfy the strong basis exchange axiom, which we recall here: for all $X,Y\in\Bases,\ i\in X-Y,$ there is some $j\in Y-X$ with $X-i+j,\ Y-j+i\in\Bases$. Thus $\bv$ is a tropical Pl\"{u}cker vector if and only if it is the basis indicator vector of a matroid over the ground set $[n]$, i.e. $\Bases=\{I\mid V_I=1\}$ is the basis set of a matroid.

The set of all tropical Pl\"{u}cker vectors are in a bijective correspondence with submodules $L\subseteq\B^n$. These submodules are the \emph{tropical linear spaces} in $\B^n$. In the boolean setting this correspondence takes a satisfying form. A tropical Pl\"{u}cker vector $\bv\in \bigwedge^d\B^n$ corresponds to a matroid $M_\bv=([n],\Bases_v)$ with bases $\Bases_v$ indicated by the vector $\bv$, and this data corresponds to a tropical linear subspace $L_\bv \subseteq \B^n$ given by the $\B$-linear span of the vectors $\sum_i w_i\be_i$ indicating the \emph{cocircuits} of $M_\bv$. Thus 
\[\bv\in \bigwedge^d\B^n \iff M_\bv=([n],\Bases_v) \iff L_\bv \subseteq \B^n.\]
See \cite{GG2017} for a more thorough discussion.

\subsection{Linear representations and $G$-fixed Pl\"{u}cker vectors}
Let $G$ be an arbitrary group and let $V\cong \B^n$ be a free module of rank $n$ and let $[n]$ be the index set for the chosen basis of $V$. A (linear) \emph{tropical representation of $G$ over $\B$} is a group homomorphism $\rho:G\longrightarrow {\rm GL}(V)$. Note that ${\rm GL}(V)$ consists solely of permutation matrices, so over the booleans the theory boils down to permutation representations. A \emph{tropical subrepresentation} of $\rho$ is given by restricting the $G-$action to a $G-$invariant tropical linear subspace $L\subseteq V$. The dimension of a tropical subrepresentation is the rank of the tropical linear subspace $L$. This also equals the rank of the corresponding matroid under the correspondence described in Section~\ref{sec:boolwedge}.

For each $1\leq d \leq n$, a given tropical representation $\rho:G\longrightarrow {\rm GL}(V)$ induces a representation  $G\longrightarrow {\rm GL}(\bigwedge^d V)$ which restricts to a $G$-action on the \emph{Dressian} ${\rm Dr}(d,n)\subseteq\bigwedge^d V$, that is, the subset consisting of tropical Pl\"{u}cker vectors of rank $d$.  
We also have a compatability of $G$-actions: whevever $L_v\subseteq V$ is a tropical linear space in $V$ associated to the tropical Pl\"{u}cker vector $\bv\in \bigwedge^d V$ then for all $g\in G$ we have $L_{g\cdot \bv} = g\cdot L_\bv$.  
Thus tropical subrepresentations of dimension $d$ correspond to $G$ fixed tropical Pl\"{u}cker vectors $\bv\in \bigwedge^d V$. This crucial observation from \cite{CGM2020} is shown for all idempotent semifields and is central to our approach. We highlight it here in the Boolean case.

\begin{theorem}[\cite{GM2020} Theorem~A(1)] For any $1\leq d \leq n$, the induced linear representation on $\bigwedge^d \B^n$ restricts to a linear action on the Dressian ${\rm Dr}(d,n)\subseteq \bigwedge^d \B^n$, and $d$-dimensional tropical subrepresentations in $\B^n$ are equivalent to $G$-fixed points in ${\rm Dr}(d,n)$.    
\end{theorem}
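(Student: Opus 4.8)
The plan is to unwind both sides of the claimed equivalence through the boolean matroid dictionary of Section~\ref{sec:boolwedge}, verifying along the way that the induced $G$-action respects that dictionary. First I would make the induced action on $\bigwedge^d\B^n$ explicit. Since $\B^\ast=\{1\}$, the group ${\rm GL}(\B^n)$ consists exactly of permutation matrices, so a tropical representation $\rho\colon G\to{\rm GL}(\B^n)$ is the same datum as a homomorphism $g\mapsto\sigma_g$ from $G$ into the symmetric group on $[n]$, with $\rho(g)\be_i=\be_{\sigma_g(i)}$. Functoriality of the exterior power for $\B$-modules (as in \cite{GG2017}) then gives a homomorphism $g\mapsto\bigwedge^d\rho(g)$ into ${\rm GL}(\bigwedge^d\B^n)$ which permutes the distinguished basis by $g\cdot\be_I=\be_{\sigma_g(I)}$, where $\sigma_g(I)=\{\sigma_g(i)\mid i\in I\}$. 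In particular the action is linear (by permutation matrices), and under the identification of $\bv\in\bigwedge^d\B^n$ with the set $\{I\mid v_I=1\}\subseteq\binom{[n]}{d}$ it is just the induced $G$-action on size-$d$ subsets of $[n]$.

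Next I would check that this action restricts to ${\rm Dr}(d,n)$. By Section~\ref{sec:boolwedge}, $\bv\in{\rm Dr}(d,n)$ if and only if $\Bases_\bv=\{I\mid v_I=1\}$ satisfies the strong basis exchange axiom. Since $\sigma_g$ is a bijection of $[n]$, applying it carries the quantified exchange statement for $\Bases_\bv$ verbatim onto the one for $\sigma_g(\Bases_\bv)=\Bases_{g\cdot\bv}$; equivalently, $\sigma_g$ is a matroid isomorphism of $M_\bv$ onto the relabelled matroid on $[n]$ with bases $\sigma_g(\Bases_\bv)$. Hence $g\cdot\bv\in{\rm Dr}(d,n)$ for all $g\in G$ whenever $\bv\in{\rm Dr}(d,n)$, so the linear action on $\bigwedge^d\B^n$ restricts to a $G$-action on the Dressian.

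Finally I would transport the bijection $\bv\mapsto L_\bv$ between rank-$d$ tropical Pl\"ucker vectors and rank-$d$ tropical linear subspaces of $\B^n$ (Section~\ref{sec:boolwedge}) to fixed loci. Using the compatibility $L_{g\cdot\bv}=g\cdot L_\bv$ stated in the excerpt -- which on the matroid side amounts to the observation that $\sigma_g$ sends the cocircuits of $M_\bv$ bijectively onto those of $M_{g\cdot\bv}$, so that $g\cdot L_\bv$ and $L_{g\cdot\bv}$ have the same spanning set of cocircuit indicator vectors -- the bijection $\bv\mapsto L_\bv$ is $G$-equivariant. It therefore restricts to a bijection between $G$-fixed points of ${\rm Dr}(d,n)$ and $G$-invariant rank-$d$ tropical linear subspaces of $\B^n$, and the latter are precisely the $d$-dimensional tropical subrepresentations of $\rho$. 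This gives the asserted equivalence.

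I do not expect a genuine obstacle here: the content is entirely in the boolean matroid dictionary of Section~\ref{sec:boolwedge}, which I am allowed to assume, and in the equivariance of the $\bv\mapsto L_\bv$ correspondence, which is already asserted in the excerpt (and proved for all idempotent semifields in \cite{CGM2020}). If anything needs care it is the bookkeeping in the second step -- checking directly that the tropical Pl\"ucker relations are stable under relabelling the ground set -- but this is immediate once they are rephrased, as in the excerpt, as the strong basis exchange axiom, which is manifestly invariant under bijections of $[n]$.
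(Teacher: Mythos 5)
Your argument is correct and takes essentially the same route as the paper, which states this result as a citation of \cite{GM2020} and sketches precisely the ingredients you use: that ${\rm GL}(\B^n)$ consists of permutation matrices, that Dressian points are basis-indicator vectors of matroids (so stability under the induced action reduces to the strong exchange axiom being preserved by relabelling), and the equivariance $L_{g\cdot\bv}=g\cdot L_\bv$. Your write-up simply supplies in full the routine verifications that the paper delegates to the reference, and does so correctly.
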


Thus the problem of classifying d-dimensional tropical subrepresentation of $\rho$ is equivalent to identifying all rank $d$ tropical Pl\"{u}cker vectors fixed by the induced $G$-action. Moreover, this reduces further to a problem in matroid theory. We recall that  an automorphism of a matroid is a permutation of the ground set sending independent sets to independent sets, and thus bases to bases and cocircuits to cocircuts. Tropical linear subspaces $L_\bv\subseteq V$ correspond to matroids $M_\bv$ over the ground set $[n]$. A tropical representation $\rho:G\longrightarrow{\rm GL}(V)$ induces a $G$-action on $[n]$, and $\rho$ factors through the automorphism group of $M_\bv$ if and only if $L_\bv$ is $G$-invariant. Since a tropical Pl\"{u}cker vector is the basis indicator vector for $M_\bv$, a $G$-fixed tropical Pl\"{u}cker vector $\bv$ corresponds to a set of bases $\Bases_\bv$ invariant under the $G$-action on $[n]$. Thus to classify d-dimensional tropical subrepresentation of $\rho$, it is enough to classify matroids rank $d$ matroids $([n],\Bases)$ for which $\Bases\subseteq {{[n]}\choose{d}}$ is invariant under the induced $G$-action.

\subsection{The regular representation}

Let $G$ be a finite group. For each $g\in G$ we write $\be_g$ for a basis element represented by the group element $g$ and denote by \[\BG = \left\{ \sum_{g\in G} c_g\be_g \right\}\] the $\B$-linear span of these basis elements. The regular representation is given by taking the $G$ action by left multiplication on the basis elements of $\BG$:
\[g\cdot \be_a=\be_{g a},\ g,a\in G.\]
Fix $1\leq d \leq n$. We can extend this action to the basis of $\bigwedge^d\BG$ in the following manner:

\[g\cdot \left(\be_{a_1}\wedge \cdots \wedge \be_{a_d}\right)=\be_{ga_1}\wedge\cdots\wedge \be_{ga_d},\ g\in G, \{a_1,\ldots,a_d\}\in {{G}\choose{d}}.\]
This extends linearly to an action on $\bigwedge^d\BG$. 

In parallel, consider the $G$ action on itself by left multiplication:
\[g\cdot a=ga,\ g,a\in G.\]
This extends to ${G}\choose{d}$ in the following manner:
\[g\cdot\{a_1,\ldots,a_d\}=\{ga_1,\ldots,ga_d\},\ g\in G,\ \{a_1,\ldots,a_d\}\in {{G}\choose{d}}.\]
There is an obvious bijection from the basis of $\bigwedge^d\BG$ to the set of subsets of $G$ of size d along which these two actions are equivalent. The bijection extends to one between $\bigwedge^d\BG$ and the power set of ${G}\choose{d}$, by sending the d-vector \[\sum_{I\in{{G}\choose{d}}} v_I\be_I\in\bigwedge^d\BG\] to the set of subsets $\left\{I|v_I=1\right\}$ it indicates. Hence terms in the tropical Pl\"ucker vector $\bv=\sum_{I\in{{G}\choose{d}}} v_I\be_I$ associated with the matroid $M_\bv=(G,\Bases_\bv)$ will have \[v_I=1\iff I\in\Bases\] and thus can be written as $\bv=\sum_{I\in\Bases_\bv}\be_I$.

\section{2-dimensional Subrepresentations of $\BG$} \label{sec:2d}
In this section we classify all 2-dimensional tropical subrepresentations of $\BG$ for an abritrary finite group $G$ and provide some examples. By \cite[Theorem~A(1)]{GM2020} these correspond to rank 2 tropical Pl\"ucker vectors fixed by the induced action of $G$ on $\bigwedge^2\BG$. They are studied by taking unions of orbits of the equivalent action on ${G}\choose{2}$. This amounts to searching for unions of orbits of the $G$ action on ${G}\choose{2}$ satisfying the strong basis exchange axiom.

\subsection{Orbits and their properties} 
To identify the orbits of the $G$ action on ${{G}\choose{2}}$ note that for any subset $\{a,b\}\in{{G}\choose{2}}$, the product $a^{-1}b$ is preserved under the $G$ action on ${{G}\choose{2}}$ since \[(ga)^{-1}gb=a^{-1}g^{-1}gb=a^{-1}b.\] We think of $a^{-1}b$ as a generalized ``difference" (see the example of $\Z_n$ in Section~\ref{ex:Zn} below). Moreover, we have that every pair with the same generalized difference is in the same orbit. To see this, let $\{a_1,b_1\},\{a_2,b_2\}\in {{G}\choose{2}}$ with \[a_1^{-1}b_1=a_2^{-1}b_2,\] giving 
\[a_2a_1^{-1}b_1=b_2.\] Then $a_2a_1^{-1}$ acting on $\{a_1,b_1\}$ gives
\[a_2a_1^{-1}\cdot\{a_1,b_1\}=\{a_2a_1^{-1}a_1,a_2a_1^{-1}b_1\}=\{a_2,b_2\}.\]

Thus the orbits are given by the sets
 \[f_g=\{\{a,ag\}|a \in G\} = \{\{a,b\}|a,b \in G, a^{-1}b = g\}\]
consisting of pairs $\{a,b\}$ with a fixed difference $g=a^{-1}b$ between the two elements. Note that $g=e$ gives $f_e=\emptyset$ and each orbit $f_g$ determines a $G$ fixed vector $\bfn_g\in \bigwedge^2\BG$. Denote by $\O_G$ the set of orbits of the $G$ action on ${{G}\choose{2}}$. Each orbit determines a $G$ fixed vector in $\bigwedge^2\BG$, denoted
 \[\bfn_g=\sum_{\substack{a\in G}} \be_a\wedge \be_{ag}.\]
We now highlight salient properties of the orbits $f_g$. 

\begin{proposition}\label{prop:orbit-1}
    For all $g\in G$, $f_g=f_{g^{-1}}$.
\end{proposition}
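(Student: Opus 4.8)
The statement $f_g = f_{g^{-1}}$ should follow directly from unwinding the definition of the orbit $f_g$ together with the fact that the elements of ${G \choose 2}$ are \emph{unordered} pairs. The plan is to show each set is contained in the other by a symmetric argument; in fact a single chain of equalities should suffice.

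First I would recall that $f_g = \{\{a, ag\} \mid a \in G\}$. The key observation is that for any $a \in G$, the pair $\{a, ag\}$ can be rewritten with the element $ag$ listed first: setting $b = ag$, we have $a = bg^{-1}$, so $\{a, ag\} = \{ag, a\} = \{b, bg^{-1}\}$. Thus every pair appearing in $f_g$ also appears in $f_{g^{-1}}$ (as $b$ ranges over all of $G$ exactly when $a$ does, since $b = ag$ is a bijection $G \to G$). This gives $f_g \subseteq f_{g^{-1}}$, and replacing $g$ by $g^{-1}$ gives the reverse inclusion, hence equality. Alternatively, one can argue via the ``generalized difference'' characterization $f_g = \{\{a,b\} \mid a^{-1}b = g\}$ already established in the excerpt: since $\{a,b\} = \{b,a\}$ and $b^{-1}a = (a^{-1}b)^{-1}$, the pair $\{a,b\}$ has generalized difference $g$ if and only if it has generalized difference $g^{-1}$, so the two defining conditions pick out the same set of pairs.

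There is essentially no obstacle here — the result is a bookkeeping fact about the non-canonical choice of ``first'' element when representing an unordered pair. The only point requiring a moment's care is to confirm that as $a$ ranges over $G$, the substitution $a \mapsto ag$ is a bijection of $G$, so that no pairs are lost or gained; this is immediate since $G$ is a group. I would keep the proof to two or three lines.
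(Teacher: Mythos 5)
Your proposal is correct and matches the paper's proof, which also uses the unordered-pair observation $\{a,b\}=\{b,a\}$ together with $b^{-1}a=(a^{-1}b)^{-1}$ to conclude $f_g\subseteq f_{g^{-1}}$ and then appeals to symmetry. No gaps; this is exactly the intended bookkeeping argument.
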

\begin{proof}
    To show $f_g\subseteq f_{g^{-1}}$, let $\{a,b\}$ be an arbitrary set in $f_g$. Then $g=a^{-1}b$ by definition and $\{a,b\}=\{b,a\}\in f_{b^{-1}a}=f_{(a^{-1}b)^{-1}}=f_{g^{-1}}$. The containment in the other direction is similar.
\end{proof}

For reference, the matroid basis axioms are: 
\begin{enumerate}
    \item $\Bases\ne\emptyset$
    \item if $A,B\in\Bases$ then $|A|=|B|$
    \item if $A,B\in\Bases$ and $x\in A-B$, then there is some $y\in B-A$ with $A-x+y\in\Bases$ (strong basis exchange).
\end{enumerate}

Let $A=\{a_1,a_2\}$ and $B=\{b_1,b_2\}$ be bases. There are three cases:
\begin{enumerate}
    \item[Case 1:] $A=B$. In this case there is no $x$ in $A-B$ and basis exchange succeeds.
    \item[Case 2:] $A,B$ are unequal but not disjoint, so they share exactly one element since they both have size 2. Without loss of generality, let $a_1=b_1$. Then $x=a_2$, $y=b_2$, and this forces $A-a_2+b_2$ to be in $\Bases$. However, this set is $\{a_1,b_2\}$, but since $a_1=b_1$ the set is just $\{b_1,b_2\}=B$.
    \item[Case 3:] $A,B$ are disjoint. Without loss of generality,  pick $x=a_2$; since the sets are disjoint, $y$ could be either $b_1$ or $b_2$. Then we have $\{a_1,b_1\}$ or $\{a_1,b_2\}$ is in $\Bases$.
\end{enumerate}
Since only the third case results in including sets not already in $\Bases$, we will only consider basis exchange between disjoint subsets of size 2 when dealing with dimension 2. The following proposition shows how basis exchange can be applied to the orbits of the $G$ action on ${{G}\choose{2}}$.

\begin{proposition}\label{prop:orbitgh} Let $\Bases$ be the set of bases of a matroid $M_\bv=(G,\Bases)$ corresponding to a $G$-fixed tropical Pl\"ucker vector $\bv\in \bigwedge^2\BG$. Then
    \[f_{gh}\subseteq\Bases\implies f_g\subseteq\Bases\text{ or }f_h\subseteq\Bases.\]
\end{proposition}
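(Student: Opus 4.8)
The plan is to prove the statement directly: assuming $f_{gh}\subseteq\Bases$, I will exhibit a single element of $f_g$ or of $f_h$ that lies in $\Bases$, which suffices because $\bv$ is $G$-fixed, so $\Bases$ is a union of $G$-orbits in $\binom{G}{2}$ and each $f_k$ is either disjoint from or entirely contained in $\Bases$. First I dispose of degenerate cases. If $g=e$ then $f_g=f_e=\emptyset\subseteq\Bases$, and symmetrically if $h=e$; and if $g^2h=e$ then $gh=g^{-1}$, so $f_{gh}=f_{g^{-1}}=f_g$ by Proposition~\ref{prop:orbit-1} and the hypothesis already yields $f_g\subseteq\Bases$. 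Hence I may assume $g\ne e$, $h\ne e$, $g^2h\ne e$, and that $gh\ne e$ (we take $f_{gh}$ to be a nonempty orbit, as is implicit in the hypothesis).

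Next, fix any $a\in G$ and consider the two subsets $A=\{a,\,agh\}$ and $B=\{ag^{-1},\,ah\}$. Since $gh\ne e$, both have size $2$, and computing generalized differences gives $a^{-1}(agh)=gh$ and $(ag^{-1})^{-1}(ah)=gh$, so $A,B\in f_{gh}\subseteq\Bases$ are both bases. The point of this choice of $B$ is that it is forced into $\Bases$ merely by lying in the orbit $f_{gh}$, while its two elements have differences $g^{-1}$ and $h$ relative to the element $a\in A$; solving $b_1=ag^{-1}$ and $b_2=ah$ subject to $b_1^{-1}b_2=gh$ is exactly what pins down $B$, and this is the one genuinely non-obvious step of the argument. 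One then checks $A\cap B=\emptyset$: the four potential coincidences $a=ag^{-1}$, $a=ah$, $agh=ag^{-1}$, $agh=ah$ are equivalent to $g=e$, $h=e$, $g^2h=e$, $g=e$ respectively, all of which have been excluded.

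Finally, apply the basis exchange axiom to $A,B\in\Bases$ with $x=agh\in A-B$: there is some $y\in B-A=B$ with $A-x+y=\{a,y\}\in\Bases$. If $y=ag^{-1}$ then $\{a,y\}\in f_{g^{-1}}=f_g$, forcing $f_g\subseteq\Bases$; if $y=ah$ then $\{a,y\}\in f_h$, forcing $f_h\subseteq\Bases$. In either case the conclusion holds. The whole proof rests on this single exchange step, the only substantive ingredient being the construction of the disjoint companion basis $B$ from the orbit $f_{gh}$; no iteration or case-heavy combinatorics is required in dimension $2$, which is what should make dimension $3$ harder.
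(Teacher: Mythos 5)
Your proof is correct and is essentially the paper's argument: the paper takes the two disjoint representatives $\{e,gh\}$ and $\{g^{-1},h\}$ of the orbit $f_{gh}$ and applies basis exchange to land in $f_{g^{-1}}=f_g$ or $f_h$, which is exactly your construction with $a=e$, including the same degenerate cases $g=e$, $h=e$, $g^2h=e$. Your explicit flagging of the $gh=e$ edge case (where $f_{gh}=f_e=\emptyset$ and the implication is only sensible for a nonempty orbit) is a small point of care the paper glosses over, but it does not change the substance.
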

\begin{proof}We note that $\{e,gh\},\{g^{-1},h\}\in f_{gh}$. These sets are disjoint as long as $g^{-1},h\neq e$, $gh\neq g^{-1}$ and $gh\neq h$. In any of these cases, $f_{gh}=f_g$ or $f_h$ and the proposition is true.
Otherwise, the sets are disjoint and we can use basis exchange to deduce \[\{e,g^{-1}\}\in \Bases\text{ or }\{e,h\}\in\Bases.\] But $\{e,g^{-1}\}\in f_{g^{-1}}=f_g$ by Proposition~\ref{prop:orbit-1} and $\{e,h\}\in f_h$. Therefore we have $f_{gh}\subseteq\Bases$ implies $f_g\subseteq\Bases$ or $f_h\subseteq\Bases$ as desired since $\Bases$ must be a union of orbits.
\end{proof}

For any subset $S\subseteq G$ of indices we introduce the notation \[f_S=\bigcup_{g\in S}f_g\] for a union of orbits. In the proofs below we may sometimes only consider cases where $S\subseteq G-\{e\}$, but since $f_e=\emptyset$ this will never be an issue when quantifying over all possible nonempty unions. Each subset $S$ determines in this way a candidate set of bases of a matroid corresponding to a 2-dimensional tropical subrepresentation. A nonempty union $f_S$ will satisfy the matroid basis exchange axioms if and only if the corresponding 2-vector  
\[\bfn_S=\sum_{g\in S} \bfn_g\] satisfies the tropical Pl\"ucker relations.

\subsection{Example: $\Z_n$} \label{ex:Zn}

 The case $G=\Z_n$ is studied in part in \cite[Section~4]{GM2020}. Here the action on subsets of size $2$ becomes
\[g\cdot\{a,b\}=\{g+a,g+b\},\ g\in \Z_n,\ \{a,b\}\in {{\Z_n}\choose{2}}.\]
Note that the difference $b-a$ is preserved under the $\Z_n$-action, and the distinct orbits are precisely the sets 
 \[f_i=\{\{a\,\, \text{mod}\,\, n,a+i\,\, \text{mod}\,\, n\}|a \in \Z_n\}\]
consisting of subsets of $\Z_n$ of size 2 with a fixed difference $i$ modulo $n$ between the two elements. It is clear from the definition that $f_i=f_j$ whenever $i$ and $j$ are congruent modulo $n$ and $f_i=f_{-i}$ for all $i$. With $n$ odd, the distinct $Z_n$-orbits are  
 \[\O_{\Z_n}= \left\{f_1,\ldots,f_{\frac{n-1}{2}}\right\}\] 
since $n-\frac{n+1}{2}=\frac{n-1}{2}$. With $n$ even, the distinct orbits are 
 \[\O_{\Z_n}= \left\{f_1,\ldots,f_{\frac{n}{2}}\right\}.\] 

Denote by $\Z_n^\times\subseteq \Z_n$ the multiplicative group of units, and let $p$ be a prime. In \cite[Theorem~4.1.3]{GM2020}, Giansiracusa \& Manaker prove that the only subrepresentation of $\BZp$ in dimension two is uniform when $2$ is a primitive root mod $p$ or $p\cong7$ mod $8$ and $2$ has order $\frac{p-1}{2}$ mod $p$. Their argument appeals to studying when $2$ generates the multiplicative quotient group $\Z_n^\times/\langle p-1\rangle$, and applying a weaker version of Proposition~\ref{prop:orbitgh} to build a chain of inclusions for the set of bases of a matroid. They conjecture \cite[Conjecture~4.1.6]{GM2020} that $p$ is prime if and only if the only subrepresentation of $\BZp$ in dimension two corresponds to the uniform rank 2 matroid on $[n]$. We prove this conjecture as a special case of Theorem~\ref{thm:main} in Section~\ref{sec:2dlast}, but as a brief example we can obtain one direction by showing that every two-dimensional tropical subrepresentation of $\BZn$ contains $f_{\Z_n^\times}$ in its set of bases. 

\begin{proposition}\label{lem:fni}     Let $\Bases$ be the set of bases of a matroid $M_\bv=([n],\Bases)$ corresponding to a $\Z_n$-fixed tropical Pl\"ucker vector $\bv\in\bigwedge^2\BZn$. Let $m$ be a positive integer. 
\begin{enumerate}
\item $f_{mi}\subseteq\Bases\implies f_i\subseteq\Bases$.
\item Let $d\mid n$ and $k\in\Z_n$ with $k\neq 0$. Then $f_{kd}\subseteq\Bases\implies f_{d\Z_n^\times}\subseteq\Bases$.
\end{enumerate}
\end{proposition}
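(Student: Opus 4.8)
For part (1), the plan is to induct on $m$, reducing to the case where $m$ is prime. If $m=1$ there is nothing to prove. For the inductive step, write $m = p m'$ with $p$ prime, so that $f_{mi} = f_{p(m'i)}$; applying the prime case to $m'i$ in place of $i$ gives $f_{m'i}\subseteq\Bases$, and the inductive hypothesis (applied to $m'$, or simply iterating) then yields $f_i\subseteq\Bases$. So everything comes down to: if $p$ is prime and $f_{pi}\subseteq\Bases$ then $f_i\subseteq\Bases$. To prove this, I would apply Proposition~\ref{prop:orbitgh} in the group $\Z_n$ with $g = i$ and $h = (p-1)i$, so that $g+h = pi$ (here "$gh$" in the proposition is the group operation, which is addition in $\Z_n$). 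This gives $f_{pi}\subseteq\Bases\implies f_i\subseteq\Bases$ or $f_{(p-1)i}\subseteq\Bases$. If the first alternative holds we are done; otherwise $f_{(p-1)i}\subseteq\Bases$, and we can iterate: writing $(p-1)i = pi - i$ and applying the proposition again with a new splitting $pi = (2i) + ((p-2)i)$, etc. More systematically, repeated use of Proposition~\ref{prop:orbitgh} with splittings of $pi$ as a sum $ki + (p-k)i$ shows that for \emph{some} $k\in\{1,\ldots,p-1\}$ we have $f_{ki}\subseteq\Bases$; since $p$ is prime, $k$ is invertible mod $p$, but this alone does not immediately give $f_i$. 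The cleanest route is instead: having any one $f_{ki}\subseteq\Bases$ with $1\le k\le p-1$, apply the proposition with $pi = i + (p-1)i$ repeatedly in the form "$f_{ji}\subseteq\Bases$ for some $j$ not divisible by $p$ $\implies$ $f_{(j-1)i}$ or $f_i$", walking $j$ down to $1$; since $p\nmid j$ at each step (as we only ever subtract, staying in $1,\ldots,p-1$ until we hit $f_i$) and we never leave this range, we must eventually land on $f_i\subseteq\Bases$.

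For part (2), suppose $d\mid n$ and $f_{kd}\subseteq\Bases$ with $k\ne 0$ in $\Z_n$. First reduce $k$: we may assume $\gcd(k, n/d)=1$, because if $\gcd(k, n/d) = e > 1$ we could write $kd = (k/e)(ed)$ — but this does not quite work since $ed$ need not divide $n$. Better: note $f_{kd}$ depends only on $kd \bmod n$, and $kd \bmod n$ is a multiple of $\gcd(kd, n) = d\cdot\gcd(k, n/d)$; replacing $d$ by $d' = d\gcd(k,n/d)$ (still a divisor of $n$) and using part (1) to pass from $f_{kd}$ to $f_{d'}$, we reduce to showing $f_{d}\subseteq\Bases \implies f_{d\Z_n^\times}\subseteq\Bases$ for a fixed divisor $d$. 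Now fix any unit $u\in\Z_n^\times$. I want $f_{ud}\subseteq\Bases$. The idea is to use part (1) in reverse together with a single application of Proposition~\ref{prop:orbitgh}: choose an integer representative $u$ with $u\equiv u_0 \pmod{n/d}$ for a suitable $u_0$; then $ud \equiv u_0 d \pmod n$, and since $\gcd(u, n/d) = 1$ we can find $m$ with $m u_0 \equiv 1\pmod{n/d}$, whence $m(u_0 d) \equiv d \pmod n$, i.e. $f_{m(ud)} = f_d$. Hmm — that gives $f_d$ from $f_{ud}$, the wrong direction.

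The correct mechanism for part (2) is as follows, and this is the step I expect to be the main obstacle. Starting from $f_d\subseteq\Bases$, I want to "multiply the difference by a unit." Write a target unit $u$; I claim $f_{ud}\subseteq\Bases$. Using Proposition~\ref{prop:orbitgh} with the splitting $ud = d + (u-1)d$ gives $f_d\subseteq\Bases$ (known) does \emph{not} directly help, since the proposition's implication runs from the sum to the summands. Instead I should run the argument the other way: I do not yet know $f_{ud}$, so I cannot split it. The resolution used in \cite[Section~4]{GM2020}, which I would adapt, is to observe that $f_d\subseteq\Bases$ together with basis exchange forces a larger chain, built by repeatedly \emph{adding} orbits whose membership is forced by disjointness failures — concretely, from $f_d$ and the group structure one shows $f_{2d}, f_{3d}, \ldots$ need not follow, so one instead argues at the level of the matroid restricted to the subgroup $d\Z_n \cong \Z_{n/d}$. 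Here $f_d$ restricted to this subgroup is the orbit of "difference a generator," and the claim becomes: a $\Z_{n/d}$-invariant rank-$2$ matroid on $\Z_{n/d}$ whose bases include all pairs at difference $1$ must include all pairs at difference a unit of $\Z_{n/d}$. This is precisely the one-direction statement quoted from \cite[Theorem~4.1.3]{GM2020} before this proposition, or rather its easy half, and I would cite or reprove it via the chain-of-inclusions technique there: for each unit $u$ of $\Z_{n/d}$, write $u$ as a product of primes, use part (1) repeatedly to climb up, and use Proposition~\ref{prop:orbitgh} to handle the prime steps, exactly mirroring the structure of part (1) but now \emph{adding} units multiplicatively rather than reducing a single difference. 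The genuinely delicate point is ensuring each intermediate difference we invoke is nonzero mod $n/d$ so that the relevant pairs are disjoint and basis exchange applies; this is where the hypothesis $k\ne 0$ and the fact that we stay within the unit group get used.
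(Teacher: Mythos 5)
Your part (1) is correct but takes a needless detour: the reduction to prime $m$ buys nothing, since the single splitting $mi=(m-1)i+i$ fed into Proposition~\ref{prop:orbitgh} already gives $f_{mi}\subseteq\Bases\implies f_{(m-1)i}\subseteq\Bases$ or $f_i\subseteq\Bases$, and a direct induction on $m$ finishes; the ``walk $j$ down to $1$'' you eventually describe is exactly this induction, and the divisibility-by-$p$ bookkeeping is irrelevant. Similarly, the gcd manipulations at the start of your part (2) are unnecessary (and slightly off-target, since replacing $d$ by $d'=d\gcd(k,n/d)$ changes the set $f_{d\Z_n^\times}$ you are supposed to produce): part (1) applied with $i=d$ and $m$ a positive representative of $k$ gives $f_{kd}\subseteq\Bases\implies f_d\subseteq\Bases$ in one line.

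The genuine gap is in the remaining step of part (2), namely $f_d\subseteq\Bases\implies f_{ud}\subseteq\Bases$ for each $u\in\Z_n^\times$. You actually wrote down the correct mechanism and then discarded it: you found $m$ with $m(ud)\equiv d \pmod n$ (i.e.\ $m\equiv u^{-1}$) and observed $f_{m(ud)}=f_d$, but then declared this ``the wrong direction.'' It is the right direction. Part (1) runs from the multiple down to the base: with $i=ud$ and this $m$, it reads $f_{m(ud)}\subseteq\Bases\implies f_{ud}\subseteq\Bases$, and the hypothesis $f_{m(ud)}=f_d\subseteq\Bases$ is exactly what you already know. That single application of part (1), with $d=u^{-1}\cdot(ud)$, is the entire proof; no restriction to the subgroup $d\Z_n$, no appeal to the chain argument of \cite{GM2020}, and no multiplicative induction over prime factors of $u$ is needed. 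As written, your fallback for this step is a sketch of an intended argument rather than a proof (the ``mirroring the structure of part (1) but adding units multiplicatively'' step is never carried out, and the ``delicate point'' about intermediate differences is never resolved), so part (2) is not established in your proposal even though the key identity is sitting in it.
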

\begin{proof}
    We prove (1) by induction. It is a tautology in the case $m=1$. Assume $f_{(m-1)i}\subseteq\Bases\implies f_i\subseteq\Bases$. Then by Proposition~\ref{prop:orbitgh}, \[f_{mi} = f_{(m-1)i+i}\subseteq\Bases\implies f_{(m-1)i}\subseteq\Bases\text{ or }f_i\subseteq\Bases.\] The latter case is immediate. In the former case, we know $f_i\subseteq\Bases$ by the inductive hypothesis.

    For (2), we have by (1) that $f_{kd}\subseteq\Bases\implies f_d\subseteq\Bases$. Then for any $u\in \Z_n^\times$ we must have \[f_d=f_{u^{-1}ud}\subseteq\Bases\implies f_{ud}\subseteq\Bases.\] Thus $f_{d\Z_n^\times}\subseteq\Bases$
\end{proof}

In the setting of the above proposition, since $\Bases$ must be a non-empty union of orbits, we know that for some $k\in \Z_n$, $f_k\subseteq\Bases$. Taking $d=1$ in (2) gives $f_{\Z_n^\times}\subseteq\Bases$. We get the following Corollary.

\begin{corollary}\label{cor:un}
    Every two-dimensional tropical subrepresentation of $\BZn$ contains $f_{\Z_n^\times}$ in its corresponding set of bases. When $n$ is prime then $f_{\Z_n^\times}=f_{[n]}={{[n]}\choose 2}$ and the only 2-dimensional tropical subrepresentation corresponds to the uniform rank 2 matroid $U_{2,n}$ on the ground set $[n]$.
\end{corollary}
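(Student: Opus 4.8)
The first assertion will follow immediately from Proposition~\ref{lem:fni}(2). The plan is this: the set of bases $\Bases$ of a matroid corresponding to a $\Z_n$-fixed tropical Pl\"ucker vector is a nonempty, $\Z_n$-invariant subset of $\binom{\Z_n}{2}$, hence a union of orbits $f_g$, at least one of which is nonempty; so there is some $k\in\Z_n$ with $k\neq 0$ and $f_k\subseteq\Bases$. Taking $d=1$ in Proposition~\ref{lem:fni}(2) (so that $d\mid n$ holds trivially) and applying it to this inclusion gives $f_{\Z_n^\times}=f_{1\cdot\Z_n^\times}\subseteq\Bases$, which is the first claim.

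For the prime case, first I would record the elementary fact that when $n=p$ is prime the units $\Z_p^\times$ are exactly the nonzero residues $\{1,\dots,p-1\}$, and since $f_0=f_e=\emptyset$ this gives $f_{\Z_p^\times}=f_{[p]}$. Next, since every pair in $\binom{[p]}{2}$ has a well-defined nonzero difference, the union $f_{[p]}=\bigcup_{g\neq 0}f_g$ sweeps out all of $\binom{[p]}{2}$, so $f_{\Z_p^\times}=\binom{[p]}{2}$. Combining this with the first assertion yields $\binom{[p]}{2}=f_{\Z_p^\times}\subseteq\Bases\subseteq\binom{[p]}{2}$, forcing $\Bases=\binom{[p]}{2}$, the basis set of the uniform rank $2$ matroid $U_{2,p}$. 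To finish I would observe that $\binom{[p]}{2}$ is indeed the basis set of a matroid and is visibly $\Z_p$-invariant, hence does correspond to a genuine $2$-dimensional tropical subrepresentation; uniqueness is then exactly the statement just proved.

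I do not expect a real obstacle: the argument is essentially a corollary of Proposition~\ref{lem:fni}(2) together with the observation that over a prime every nonzero residue is a unit. The only point deserving a sentence of care is the last one --- confirming that $U_{2,p}$ genuinely arises, rather than merely being the unique candidate --- which is immediate since the full collection $\binom{[p]}{2}$ trivially satisfies strong basis exchange (Case~3 of the case analysis preceding Proposition~\ref{prop:orbitgh} is automatic once every pair is a basis).
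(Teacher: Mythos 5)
Your argument is correct and follows the paper's own route exactly: the paper likewise notes that $\Bases$ is a nonempty union of orbits, so $f_k\subseteq\Bases$ for some $k\neq 0$, applies Proposition~\ref{lem:fni}(2) with $d=1$ to get $f_{\Z_n^\times}\subseteq\Bases$, and then observes that for $n$ prime this union is all of ${{[n]}\choose{2}}$. Your closing remark verifying that ${{[p]}\choose{2}}$ genuinely satisfies basis exchange (so that $U_{2,p}$ actually occurs) is a small but worthwhile addition the paper leaves implicit.
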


\subsection{Classification in dimension 2} \label{sec:2dlast}
Our first theorem classifies all two-dimensional tropical subrepresentations of $\BG$.

\begin{theorem}\label{thm:main}
The two-dimensional tropical subrepresentations of $\BG$ correspond to the Pl\"{u}cker vectors $\bfn_{{G-H}}$ for $H$ a proper subgroup of $G$.
\end{theorem}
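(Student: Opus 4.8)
By \cite[Theorem~A(1)]{GM2020} a two-dimensional tropical subrepresentation of $\BG$ is the same data as a $G$-fixed rank-$2$ tropical Pl\"ucker vector, i.e.\ a matroid $M_\bv=(G,\Bases)$ of rank $2$ whose basis set $\Bases\subseteq\binom{G}{2}$ is $G$-invariant; being $G$-invariant, $\Bases$ is a union of the orbits $f_g$, so $\Bases=f_S$ for the set $S=\{g\in G-\{e\}\mid f_g\subseteq\Bases\}$, which is nonempty since $\Bases\ne\emptyset$ and $f_e=\emptyset$. The plan is to prove that $f_S$ satisfies the basis exchange axiom if and only if $H:=G-S$ is a proper subgroup of $G$, and that the resulting assignment $H\mapsto\bfn_{G-H}$ is a bijection with inverse recovering $G-H$ from $\Bases$ as its set of occurring non-identity differences.

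\textbf{Forward direction.} Assume $\Bases=f_S$ is the basis set of a matroid and set $H=G-S$. Then $e\in H$ by construction, and $H$ is closed under inversion because, by Proposition~\ref{prop:orbit-1}, for $g\ne e$ we have $g\in S\iff f_g\subseteq\Bases\iff f_{g^{-1}}\subseteq\Bases\iff g^{-1}\in S$, so $S=S^{-1}$ and hence $H=H^{-1}$. Closure under products is precisely the contrapositive of Proposition~\ref{prop:orbitgh}: given $a,b\in H$, if $a=e$ or $b=e$ then $ab\in\{a,b\}\subseteq H$; otherwise, were $ab\in S$ we would have $f_{ab}\subseteq\Bases$, forcing $f_a\subseteq\Bases$ or $f_b\subseteq\Bases$, i.e.\ $a\in S$ or $b\in S$, contradicting $a,b\in H$; so $ab\in H$. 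Thus $H$ is a subgroup, and it is proper since $S$ contains a non-identity element. Hence every two-dimensional tropical subrepresentation corresponds to $\Bases=f_{G-H}=\bfn_{G-H}$ for a proper subgroup $H$.

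\textbf{Reverse direction and bijectivity.} Fix a proper subgroup $H$. Since $H=H^{-1}$, one has $\{a,b\}\in f_{G-H}\iff a^{-1}b\notin H\iff aH\ne bH$, so $f_{G-H}$ is exactly the set of $2$-element subsets of $G$ meeting two distinct left cosets of $H$. This is the basis set of the rank-$2$ matroid obtained from the uniform matroid $U_{2,[G:H]}$ by replacing its $i$-th element with the parallel class consisting of the $i$-th coset of $H$; a parallel extension of a matroid is a matroid, and since $H$ is proper, $[G:H]\ge 2$, so this has rank $2$ and nonempty basis set. Therefore $\bfn_{G-H}$ satisfies the tropical Pl\"ucker relations, and it is $G$-fixed because left multiplication permutes the left cosets of $H$, so it defines a two-dimensional tropical subrepresentation. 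For bijectivity, from $\Bases=f_{G-H}$ one recovers $G-H$ as $\{g\in G-\{e\}\mid f_g\subseteq\Bases\}$, so $H\mapsto\bfn_{G-H}$ is injective, and the forward direction shows it is surjective onto all two-dimensional subrepresentations.

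\textbf{Main obstacle.} The forward direction is essentially immediate once one notices that ``$G-S$ is closed under products'' is literally Proposition~\ref{prop:orbitgh} read contrapositively, the only subtlety being the bookkeeping around the empty orbit $f_e$. The one place that needs a genuine (if short) argument is confirming in the reverse direction that the candidate $f_{G-H}$ actually obeys basis exchange; I expect the cleanest route to be the structural identification above with a parallel extension of $U_{2,[G:H]}$, which avoids a case analysis on whether the two bases involved are disjoint or share an element (the disjoint case being the only nontrivial one, as already observed before Proposition~\ref{prop:orbitgh}).
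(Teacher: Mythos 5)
Your proof is correct, and half of it is genuinely different from the paper's. The direction showing that a matroidal union of orbits must be $f_{G-H}$ for a subgroup $H$ is essentially identical to the paper's argument: the paper also applies the subgroup test to $S^C$ via Proposition~\ref{prop:orbitgh} (using the one-step test $a,b\in S^C\Rightarrow ab^{-1}\in S^C$ rather than your two separate closure checks, but this is cosmetic), with Proposition~\ref{prop:orbit-1} handling inverses. Where you diverge is in verifying that $f_{G-H}$ actually satisfies basis exchange. The paper does this by a direct computation: assuming $\{a_1,b_1\},\{a_1,b_2\}\in f_H$, it multiplies the two differences to get $b_1^{-1}b_2\in H$, contradicting $\{b_1,b_2\}\in f_{G-H}$. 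You instead identify $f_{G-H}$ structurally, via $\{a,b\}\in f_{G-H}\iff aH\neq bH$, as the basis set of the rank-$2$ matroid whose parallel classes are the left cosets of $H$ (a parallel extension of $U_{2,[G:H]}$), and invoke the fact that parallel extensions preserve matroids. Both are valid; the paper's computation is self-contained and mirrors the group-theoretic flavor of the converse direction, while your identification is more conceptual, names the matroid explicitly (which the paper never does), and makes $G$-fixedness transparent since left multiplication permutes cosets. Your added remarks on bijectivity (recovering $G-H$ as the set of occurring differences, which works because $f_g$ and $f_{g'}$ are disjoint unless $g'\in\{g,g^{-1}\}$ and $G-H$ is inverse-closed) are correct and fill in a point the paper leaves implicit.
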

\begin{proof}
We begin by showing that, for any proper subgroup $H$ of $G$, the union of orbits $f_{G-H}$ forms the bases of a matroid. Since $H$ is proper, we know $f_{G-H}$ is a non-empty subset of ${{G}\choose{2}}$. 
 Let $\{a_1,a_2\}$ and $\{b_1,b_2\}$ in $f_{G-H}$. Without loss of generality, it is enough to show $\{a_1,b_1\}$ or $\{a_1,b_2\}$ is in $f_{G-H}$.
Assume on the contrary $\{a_1,b_1\}$ and $\{a_1,b_2\}\not\in f_{G-H}$. Notice that since $H$ is a subgroup, both $H$ and $G-H$ are closed under taking inverses so $f_{G-H}\cap f_{H} = \emptyset$ and $f_{G-H}\cup f_{H} = {{G}\choose{2}}$, giving a disjoint union. Thus $\{a_1,b_1\}\in f_{H}$ and $\{a_1,b_2\}\in f_{H}$. Then $a_1^{-1}b_1,a_1^{-1}b_2\in H$, so 
\[(a_1^{-1}b_1)^{-1}a_1^{-1}b_2=b_1^{-1}a_1a_1^{-1}b_2=b_1^{-1}b_2\in H.\]
Thus $\{b_1,b_2\}\in f_H$ contradicting our assumption that $\{b_1,b_2\} \in f_{G-H}$.

For the converse, we will now show that orbit unions of the form $f_{G-H}$ with $H$ a proper subgroup of $G$ are the only $G$-fixed matroidal subsets of ${{G}\choose{2}}$. Let $S\subseteq G-\{e\}$ with $S$ nonempty and assume the union of orbits $f_S=\bigcup_{i\in S} f_i$ is the set of bases indicated by a $G$ fixed Pl\"{u}cker vector, and thus matroidal. Our goal is to show the complement $S^C = G - S$ is a proper subgroup of $G$. 

In the case $S=G-\{e\}$ we have $S^C=\{e\}$, which is a proper subgroup as desired. In this case, $f_S={{G}\choose{2}}$ and thus $\bfn_S$ corresponds to the uniform matroid. 

Assume now $S$ is a proper nonempty subset of $G-\{e\}$. Let $a,b\in S^C$. Then $f_a,f_b\subseteq f_{S^C}$. We will use the subgroup test to show that $S^C$ is a subgroup of $G$.  Assume on the contrary that $ab^{-1}\not \in S^C$.  Then $ab^{-1} \in S$ giving $f_{ab^{-1}}\subseteq f_S$. Thus $f_a$ or $f_{b^{-1}}=f_b\subseteq f_S$ by Proposition~\ref{prop:orbitgh}. So $a$ or $b\in S$ which contradicts our assumption $a,b\in S^C$. Thus $S^C$ is a subgroup of $G$.    
\end{proof}

\begin{remark}
    The uniform matroid corresponds to choosing $H=\{e\}$. 
 \end{remark}

In the cyclic case $G=\Z_n$, setting $n$ to be a prime gives $f_{\Z_n^\times} = {{[n]}\choose{2}}$ and confirms Giansiracusa \& Manaker's Conjecture.

\begin{corollary}\cite[Conjecture~4.1.6]{GM2020}
For $\BZn$, $n$ is prime if and only if the only two-dimensional subrepresentation corresponds to the uniform matroid. 
\end{corollary}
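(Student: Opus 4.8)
The plan is to deduce this corollary directly from Theorem~\ref{thm:main}, which is why it is stated immediately afterward. By Theorem~\ref{thm:main}, two-dimensional tropical subrepresentations of $\BZn$ correspond bijectively to proper subgroups $H\subset \Z_n$, with the subrepresentation attached to $H$ having bases $f_{\Z_n-H}$. So the statement ``the only two-dimensional subrepresentation is the uniform one'' is equivalent to the assertion that $\Z_n$ has exactly one proper subgroup, namely the trivial subgroup $\{0\}$ (which recovers the uniform matroid, as in the remark following the theorem).

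Thus the corollary reduces to the elementary group-theoretic fact: the cyclic group $\Z_n$ has $\{0\}$ as its only proper subgroup if and only if $n$ is prime. First I would handle the forward direction: if $n$ is prime, then by Lagrange's theorem any subgroup has order dividing $n$, hence order $1$ or $n$, so the only proper subgroup is $\{0\}$; therefore by Theorem~\ref{thm:main} the only subrepresentation corresponds to $H=\{0\}$, i.e.\ $f_{\Z_n-\{0\}} = f_{[n]} = {{[n]}\choose{2}}$, the bases of $U_{2,n}$. For the reverse direction (contrapositive), if $n$ is composite, write $n = ab$ with $1<a<n$; then the subgroup generated by $a$ is a nontrivial proper subgroup of $\Z_n$, so Theorem~\ref{thm:main} furnishes a second, non-uniform, two-dimensional subrepresentation. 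To match the phrasing in the excerpt it is worth remarking that when $n$ is prime, $\Z_n^\times = \Z_n - \{0\}$, so $f_{\Z_n^\times} = {{[n]}\choose{2}}$, which connects this cleanly with Corollary~\ref{cor:un}.

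There is essentially no obstacle here; the only thing to be careful about is bookkeeping the bijection of Theorem~\ref{thm:main} in both directions so that ``uniqueness of the subrepresentation'' is correctly translated into ``uniqueness of the proper subgroup,'' and confirming that the trivial subgroup indeed yields the uniform matroid so that ``only the uniform one exists'' and ``only the trivial subgroup exists'' coincide. The argument is short enough that it can simply be written inline rather than as a displayed multi-step proof.
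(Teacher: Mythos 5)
Your proposal is correct and matches the paper's approach exactly: the corollary is read off from Theorem~\ref{thm:main} via the elementary fact that $\Z_n$ has only the trivial proper subgroup if and only if $n$ is prime, with the trivial subgroup giving the uniform matroid. The paper leaves these details as a one-line remark, so your slightly more explicit write-up (Lagrange in one direction, a nontrivial subgroup $\langle a\rangle$ for composite $n$ in the other) is just a fuller version of the same argument.
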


\subsection{Examples}
\begin{example}[$\B{[Q_8]}$]

    We use the usual presentation \[Q_8=\langle -1, i, j, k | (-1)^2=1, i^2=j^2=k^2=ijk=-1\rangle\] of the quaternions. There are 4 orbits of the action of $Q_8$ on subsets of $Q_8$ of size 2 by left multiplication:
    \begin{align*}
      f_{-1}=&\{\{1,-1\},\{i,-i\},\{j,-j\},\{k,-k\}\},\\
      f_{i}=&\{\{1,i\},\{-1,-i\},\{i,-1\},\{-i,1\},\{j,-k\},\{-j,k\},\{k,j\},\{-k,-j\}\},\\
      f_{j}=&\{\{1,j\},\{-1,-j\},\{i,k\},\{-i,-k\},\{j,-1\},\{-j,1\},\{k,-i\},\{-k,i\}\},\\
      f_{k}=&\{\{1,k\},\{-1,-k\},\{i,-j\},\{-i,j\},\{j,i\},\{-j,-i\},\{k,-1\},\{-k,1\}\}.
    \end{align*}
    By Proposition~\ref{prop:orbitgh}, $f_{-1}\subseteq\Bases\implies f_i\subseteq\Bases$ since $i^2=-1$. Combining this with similar statements for $j$ and $k$, we obtain $f_{-1}\subseteq\Bases\implies f_i\cup f_j\cup f_k\subseteq\Bases$.

    We can write $i=jk$, so by Proposition~\ref{prop:orbitgh} we know that $f_i\subseteq\Bases\implies f_j\text{ or }f_k\subseteq\Bases$.
    Similarly for $f_j$ and $f_k$, we obtain \[f_j\subseteq\Bases\implies f_i\text{ or }f_k\subseteq\Bases,\] and \[f_k\subseteq\Bases\implies f_i\text{ or }f_j\subseteq\Bases.\]
    Combining all of these statements together, it can be checked that the only matroidal sums of orbits are the following:
    \begin{align*}
      f_{Q_8-\langle k\rangle}=&f_i\cup f_j\\
      f_{Q_8-\langle j\rangle}=&f_i\cup f_k\\
      f_{Q_8-\langle i\rangle}=&f_j\cup f_k\\
      f_{Q_8-\langle -1\rangle}=&f_i\cup f_j\cup f_k\\
      f_{Q_8-\langle 1\rangle}=&f_{-1}\cup f_i\cup f_j\cup f_k
    \end{align*}
    These correspond to the subgroups of $Q_8$.
    
\end{example}

\begin{example}[$\B{[D_n]}$]
    We use the presentation $D_n=\langle \rho, \sigma | \rho^n=\sigma^2=e$, $\sigma\rho\sigma=\rho^{-1}\rangle$. The orbits of the left action of $D_n$ on ${D_n}\choose{2}$ are as follows:
    \begin{enumerate}
        \item $f_{\rho^i}=\{\{e,\rho^i\},\{\rho,\rho^{i+1}\},\cdots,\{\rho^{n-1},\rho^{i-1}\},\{\sigma,\sigma\rho^i\},\{\sigma\rho,\sigma\rho^{i+1}\},\cdots,\{\sigma\rho^{n-1},\sigma\rho^{i-1}\}\}$ 
        for $0<i<\frac{n}{2}$,
        \item $f_{\rho^{\frac{n}{2}}}=\{\{e,\rho^{\frac{n}{2}}\},\{\rho,\rho^{\frac{n}{2}+1}\},\cdots,\{\rho^{\frac{n}{2}-1},\rho^{n-1}\},\{\sigma,\sigma\rho^{\frac{n}{2}}\},\{\sigma\rho,\sigma\rho^{\frac{n}{2}+1}\},\cdots,\{\sigma\rho^{\frac{n}{2}-1},\sigma\rho^{n-1}\}\}
    $ if $n$ is even,
        \item $f_{\sigma\rho^i}=\{\{e,\sigma\rho^i\},\{\rho,\sigma\rho^{i-1}\},\cdots,\{\rho^{n-1},\sigma\rho^{i+1}\}\}$, for $0\le i<n$.
    \end{enumerate}
    If $n$ is odd, there are $\frac{n-1}{2}$ of the first type of orbit, each of which has order $2n$, while there are $n$ of the third type of orbit, each of which has order $n$. This gives a total of $2n(\frac{n-1}{2})+n(n)=n(2n-1)={{2n}\choose{2}}$ elements. If $n$ is even, there are $\frac{n}{2}-1$ of the first type of orbit, each of which has order $2n$, one of the second type of orbit of order $n$, and $n$ of the third type of orbit, which has order $n$. This gives us a total of $2n(\frac{n}{2}-1)+1(n)+n(n)=n(2n-1)={{2n}\choose{2}}$ elements.

    By Theorem~\ref{thm:main}, matroidal unions of orbits correspond to subgroups of $D_n$. Subgroups of $D_n$ are classified as follows (see for instance \cite[Theorem 3.1]{Conrad} or \cite{Cavior}):
    \begin{enumerate}
        \item[(i)] $\langle \rho^d\rangle$, $d\mid n$,
        \item[(ii)] $\langle \rho^d,\sigma\rho^i\rangle$, $d\mid n$ $0\le i<d$.
    \end{enumerate}
    
Since the subgroups depend on the factorization of $n$, we restrict our example to the case $D_p$ where $p$ is prime for simplicity. The corresponding unions of orbits are:

    $d=1$: $\langle\rho\rangle$ corresponds to $f_{D_p-\langle\rho\rangle}=f_{\sigma}\cup f_{\sigma\rho}\cup \cdots\cup f_{\sigma\rho^{n-1}}$;

    $d=p$: $\langle e\rangle$ corresponds to the uniform matroid, $f_{D_p-\langle e\rangle} = f_{D_p}$, and

    \hspace{36pt}$\langle\sigma\rho^i\rangle$, $0\le i<p$, corresponds to $f_{D_p-\langle\sigma\rho^i\rangle}=f_{D_p}-f_{\sigma\rho^i}$.
    
    For a specific example of the composite case, we turn to $D_4$:

    $d=1$: $\langle\rho\rangle$ corresponds to $f_{D_4-\langle\rho\rangle}=f_{\sigma}\cup f_{\sigma\rho}\cup f_{\sigma\rho^2}\cup f_{\sigma\rho^3}$;

    $d=2$: $\langle\rho^2\rangle$ corresponds to $f_{D_4-\langle\rho^2\rangle}=f_{\rho}\cup f_{\sigma}\cup f_{\sigma\rho}\cup f_{\sigma\rho^2}\cup f_{\sigma\rho^3}$;
    
    \hspace{36pt}$\langle\rho^2,\sigma\rangle$ corresponds to $f_{D_4-\langle\rho^2,\sigma\rangle}=f_{\rho}\cup f_{\sigma\rho}\cup f_{\sigma\rho^3}$;

    \hspace{36pt}$\langle\rho^2,\sigma\rho\rangle$ corresponds to $f_{D_4-\langle\rho^2,\sigma\rho\rangle}=f_{\rho}\cup f_{\sigma}\cup f_{\sigma\rho^2}$;

    $d=4$: $\langle e\rangle$ corresponds to $f_{D_4-\langle e\rangle}=f_{\rho}\cup f_{\rho^2}\cup f_{\sigma}\cup f_{\sigma\rho}\cup f_{\sigma\rho^2}\cup f_{\sigma\rho^3}$ (uniform);

    \hspace{36pt} $\langle\sigma\rangle$ corresponds to $f_{D_4-\langle\sigma\rangle}=f_{\rho}\cup f_{\rho^2}\cup f_{\sigma\rho}\cup f_{\sigma\rho^2}\cup f_{\sigma\rho^3}$;

    \hspace{36pt} $\langle\sigma\rho\rangle$ corresponds to $f_{D_4-\langle\sigma\rho\rangle}=f_{\rho}\cup f_{\rho^2}\cup f_{\sigma}\cup f_{\sigma\rho^2}\cup f_{\sigma\rho^3}$;

    \hspace{36pt} $\langle\sigma\rho^2\rangle$ corresponds to $f_{D_4-\langle\sigma\rho^2\rangle}=f_{\rho}\cup f_{\rho^2}\cup f_{\sigma}\cup f_{\sigma\rho}\cup f_{\sigma\rho^3}$;

    \hspace{36pt} $\langle\sigma\rho^3\rangle$ corresponds to $f_{D_4-\langle\sigma\rho^3\rangle}=f_{\rho}\cup f_{\rho^2}\cup f_{\sigma}\cup f_{\sigma\rho}\cup f_{\sigma\rho^2}$.

\end{example}

\section{3-dimensional Subrepresentations} \label{sec:3d}

The problem in dimension 3 offers more combinatorial complexity. We present in this section some properties of orbits and computations of three dimensional subrepresntations of $\BG$. We identify tropical subrepresentations of $\BG$ corresponding to subgroups of index larger than 2. In the cyclic case we find additional tropical subrepresentations of $\BZn$ that do not fit into this correspondence. We also identify a specific collection of orbits, indexed by $\Z_n^\times$, that are contained in the set of bases of any matroid corresponding to a subrepresentation of $\BZn$.

\subsection{Orbits and their properties} \label{sec:3d1} As in dimension 2, the $G$ action on $\BG$ extends to the basis of $\bigwedge^3\BG$ as
\[g\cdot \be_a\wedge \be_b \wedge \be_c = \be_{ga}\wedge \be_{gb}\wedge \be_{gc},\ g\in G, \{a,b,c\}\in {{G}\choose{3}}\]
and extends linearly to an action on $\bigwedge^3\BG$. The equivalent $G$ action on ${{G}\choose{3}}$ is similar. Following the approach in dimension 2, $G$-orbits in this setting are given by sets of the form
\[f_{g,h} = \{\{a,ag,ah\} | a\in G\} = \{\{a,b,c\} | a^{-1}b=g\text{ and }a^{-1}c = h\}\]
for any $g,h\neq e$ with $g\neq h$. It is obvious from the definition that $f_{g,h} = f_{h,g}$. We write $\bfn_{g,h}\in\bigwedge^3\BG$ for the correspinding  Pl\"ucker vector. We will investigate similar properties of the orbits that can be deduced in the 3-dimensional setting. The first is analogous to Proposition~\ref{prop:orbit-1}.

\begin{proposition}\label{prop:dim3orbit}
    For all $g,h\in G$, $f_{g,h}=f_{g^{-1},g^{-1}h}=f_{h^{-1},h^{-1}g}$.
\end{proposition}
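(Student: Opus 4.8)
The plan is to prove the two claimed equalities $f_{g,h}=f_{g^{-1},g^{-1}h}$ and $f_{g,h}=f_{h^{-1},h^{-1}g}$ by exhibiting, for each, an explicit group element that translates a generic triple of one orbit onto a triple of the other, exactly in the spirit of the computation $a_2a_1^{-1}\cdot\{a_1,b_1\}=\{a_2,b_2\}$ used for dimension 2 and of Proposition~\ref{prop:orbit-1}. Recall the orbit is $f_{g,h}=\{\{a,ag,ah\}\mid a\in G\}$, and that the orbit label is recovered from a triple by fixing a ``basepoint'': if $a$ is the basepoint of $\{a,ag,ah\}$, the label is $(a^{-1}(ag),\,a^{-1}(ah))=(g,h)$. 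The key observation is that the same unordered triple $\{a,ag,ah\}$ can be written with any of its three elements as the basepoint, and each choice produces one of the three labels in the statement.

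First I would handle $f_{g,h}=f_{g^{-1},g^{-1}h}$. Take a generic element $\{a,ag,ah\}\in f_{g,h}$ and rewrite it using $b:=ag$ as the basepoint. Then $a=b g^{-1}$ and $ah=(bg^{-1})h=b(g^{-1}h)$, so
\[
\{a,ag,ah\}=\{bg^{-1},\,b,\,b(g^{-1}h)\}=\{b,\,bg^{-1},\,b(g^{-1}h)\}\in f_{g^{-1},\,g^{-1}h}.
\]
This shows $f_{g,h}\subseteq f_{g^{-1},g^{-1}h}$; since orbits are either equal or disjoint (they partition $\binom{G}{3}$), equality follows. Alternatively one can run the argument symmetrically to get the reverse inclusion, which is the style used in Proposition~\ref{prop:orbit-1}. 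Second, for $f_{g,h}=f_{h^{-1},h^{-1}g}$, rewrite the same triple using $c:=ah$ as the basepoint: $a=ch^{-1}$ and $ag=(ch^{-1})g=c(h^{-1}g)$, so $\{a,ag,ah\}=\{c,\,ch^{-1},\,c(h^{-1}g)\}\in f_{h^{-1},\,h^{-1}g}$, giving the second equality. One should also note the harmless symmetry $f_{h^{-1},h^{-1}g}=f_{h^{-1}g,\,h^{-1}}$ coming from $f_{x,y}=f_{y,x}$, so no ordering worries arise.

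There is no serious obstacle here; the proposition is a bookkeeping fact about changing basepoints within a single $G$-orbit. The one point requiring a sentence of care is the boundary cases: the orbit notation $f_{g,h}$ is only defined for $g,h\neq e$ and $g\neq h$, so one should check that the relabelings produce admissible indices — e.g.\ $g^{-1}\neq e$ since $g\neq e$, and $g^{-1}h\neq e$ since $g\neq h$, and $g^{-1}\neq g^{-1}h$ since $e\neq h$ — and similarly for the other triple. These are immediate, so I would dispatch them in one line and present the proof as the two short basepoint-change computations above.
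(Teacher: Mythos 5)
Your proof is correct and is essentially the same as the paper's: both arguments take a triple in $f_{g,h}$ and re-read it from a different basepoint (the paper computes $g^{-1}=b^{-1}a$ and $g^{-1}h=b^{-1}c$ for $\{a,b,c\}$ with $a^{-1}b=g$, $a^{-1}c=h$, which is exactly your change of basepoint to $b=ag$). Your extra check that the new indices are admissible is a small bonus the paper omits; no substantive difference.
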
 
\begin{proof}
    To show $f_{g,h}\subseteq f_{g^{-1},g^{-1}h}$, let $\{a,b,c\}\in f_{g,h}$ be given. Then $g=a^{-1}b$ and $h=a^{-1}c$. Notice $g^{-1} = b^{-1}a$ and
    \begin{align*}
        g^{-1}h &=(b^{-1}a)(a^{-1}c)\\
        &=b^{-1}c
    \end{align*}
    thus 
    $\{a,b,c\} = \{b,a,c\}\in f_{g^{-1},g^{-1}h}$. The other containment is similar. Finally, a similar proof gives the third equality.
\end{proof}

Let $A=\{a_1,a_2,a_3\}$ and $B=\{b_1,b_2,b_3\}$ in $\Bases$ be bases of a matroid $(G,\Bases)$. There are now four cases when analyzing basis exchange to replace $a_3$:

\begin{enumerate}
    \item[Case 1:] $A=B$. In this case $A-B$ is empty and basis exchange succeeds.
    \item[Case 2:] $A,B$ share two elements, none of which are $a_3$. Without loss of generality, let $a_1=b_1$ and $a_2=b_2$. Then $x=a_3$, $y=b_3$, and this forces $A-a_3+b_3$ to be in $\Bases$. However, this set is $\{a_1,a_2,b_3\}$, but since $a_1=b_1$ and $a_2=b_2$ the set is just $\{b_1,b_2,b_3\}=B$.
    \item[Case 3:] $A,B$ share a single element that isn't $a_3$. Without loss of generality, let $a_1=b_1$. Then replacing $a_3$ gives $\{a_1,a_2,b_2\}$ or $\{a_1,a_2,b_3\}\in\Bases$.
    \item[Case 4:] $A,B$ are disjoint. Then replacing $a_3$ gives$\{a_1,a_2,b_1\}$, $\{a_1,a_2,b_2\}$ or $\{a_1,a_2,b_3\}\in\Bases$.
\end{enumerate}
Note that only cases 3 and 4 result in the inclusion of new bases. The following proposition shows how basis exchange in this setting interacts with the orbits of the $G$-action.

\begin{proposition}\label{prop:dim3exchange}
     Let $\Bases$ be the set of bases of a matroid $M_\bv=(G,\Bases)$ corresponding to a $G$-fixed tropical Pl\"ucker vector $\bv\in\bigwedge^3\BG$. Then $f_{g,h}\cup f_{g',h'}\subseteq\Bases\implies f_{g,g'}\text{ or } f_{g,h'}\subseteq\Bases$.
 \end{proposition}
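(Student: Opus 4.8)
The plan is to adapt the argument of Proposition~\ref{prop:orbitgh}, now choosing one representative from each of the two orbits in the hypothesis rather than two representatives of a single orbit. Concretely I would take $A=\{e,g,h\}\in f_{g,h}$ and $B=\{e,g',h'\}\in f_{g',h'}$, both of which lie in $\Bases$ by assumption. These sets always contain $e$, and the key point is that when $\{g,h\}$ and $\{g',h'\}$ are disjoint they share \emph{only} $e$, so $A$ and $B$ fall into Case~3 of the case analysis preceding the proposition. Exchanging $h$ out of $A$ using $B$ then yields $\{e,g,y\}\in\Bases$ for some $y\in B-A=\{g',h'\}$; since $\Bases$ is $G$-invariant (it is the basis set of a matroid corresponding to a $G$-fixed Pl\"ucker vector, hence a union of $G$-orbits), $\{e,g,g'\}\in\Bases$ forces $f_{g,g'}\subseteq\Bases$ while $\{e,g,h'\}\in\Bases$ forces $f_{g,h'}\subseteq\Bases$, which is exactly the claim.

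Before running this I would dispose of the degenerate situations in which $\{g',h'\}$ meets $\{g,h\}$. If $g'=h$ then $f_{g,g'}=f_{g,h}\subseteq\Bases$; if $h'=h$ then $f_{g,h'}=f_{g,h}\subseteq\Bases$; if $g'=g$ then $f_{g,h'}=f_{g',h'}\subseteq\Bases$; and if $h'=g$ then $f_{g,g'}=f_{g',g}=f_{g',h'}\subseteq\Bases$, using $f_{x,y}=f_{y,x}$. In each case one of the two target orbits is literally one of the hypothesized orbits, and using $g\neq h$, $g'\neq h'$, and that none of $g,h,g',h'$ equals $e$, one checks the relevant index pair consists of two distinct non-identity elements, so that $f_{\cdot,\cdot}$ is a legitimate orbit symbol. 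This also subsumes the possibility $A=B$, which can only occur when $\{g,h\}=\{g',h'\}$. In the remaining case $\{g,h\}\cap\{g',h'\}=\emptyset$ the five elements $e,g,h,g',h'$ are distinct, so $A$ and $B$ are distinct bases with $A\cap B=\{e\}$, $A-B=\{g,h\}$, and $B-A=\{g',h'\}$; applying basis exchange to $A$ with $x=h$ produces $y\in\{g',h'\}$ with $\{e,g,y\}\in\Bases$, a genuine $3$-element set lying in $f_{g,g'}$ if $y=g'$ and in $f_{g,h'}$ if $y=h'$, and $G$-invariance of $\Bases$ finishes the proof.

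The only step that needs genuine attention — and it is entirely elementary — is the bookkeeping in the degenerate cases: confirming that every orbit symbol $f_{\cdot,\cdot}$ written there refers to a valid orbit (two distinct indices, neither equal to $e$). Everything else is immediate from the Case~3/Case~4 analysis already set up before the statement, Proposition~\ref{prop:dim3orbit}, and the fact that the basis set of a matroid corresponding to a $G$-fixed Pl\"ucker vector is $G$-invariant.
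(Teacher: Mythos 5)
Your proposal is correct and follows essentially the same route as the paper: exchange between the representatives $\{e,g,h\}$ and $\{e,g',h'\}$ (which share only the identity) in the generic case, plus a case check when $\{g,h\}$ and $\{g',h'\}$ intersect, where one of the two target orbits coincides with a hypothesized orbit. Your extra bookkeeping about the validity of the orbit symbols is a welcome refinement but does not change the argument.
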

 \begin{proof}
     First consider the case where $g,h,g',$ and $h'$ are distinct. We know $\{0,g,h\}\in f_{g,h}$ and $\{0,g',h'\}\in f_{g',h'}$. Basis exchange gives $\{0,g,g'\}$ or $\{0,g,h'\}\in\Bases$ and $f_{g,g'}$ or $f_{g,h'}\subseteq\Bases$. In the case where $g,h,g',h'$ are not distinct, there are 6 cases:
     \begin{itemize}
         \item the cases $g=h$ and $g'=h'$ aren't possible based on how the orbits are defined,
         \item $g=g'\implies f_{g',h'}=f_{g,h'}$,
         \item $g=h'\implies f_{g',h'}=f_{g,g'}$,
         \item $h=g'\implies f_{g,h}=f_{g,g'}$,
         \item $h=h'\implies f_{g,h}=f_{g,h'}$.
     \end{itemize}
\end{proof}

As an immediate corollary, we have a property of orbits that is analogous to the product property of Proposition~\ref{prop:orbitgh}.

 \begin{corollary}\label{prop:dim3sumdiff} Let $\Bases$ be the set of bases of a matroid $M_\bv=(G,\Bases)$ corresponding to a $G$-fixed tropical Pl\"ucker vector $\bv\in\bigwedge^3\BG$. Then $f_{g,gh}\subseteq\Bases\implies f_{g,g^{-1}}\text{ or } f_{g,h}\subseteq\Bases$.
\end{corollary}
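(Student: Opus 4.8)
The plan is to derive this directly from Proposition~\ref{prop:dim3exchange} by feeding that proposition the single orbit $f_{g,gh}$ written in two different ways. This mirrors the dimension~$2$ derivation of Proposition~\ref{prop:orbitgh}, where one exploited that $\{e,gh\}$ and $\{g^{-1},h\}$ both lie in $f_{gh}$; at the level of whole orbits the analogous move is to use Proposition~\ref{prop:dim3orbit} to re-express $f_{g,gh}$.

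Concretely, I would first invoke Proposition~\ref{prop:dim3orbit} with $h$ replaced by $gh$ to rewrite
\[
f_{g,gh}=f_{g^{-1},\,g^{-1}(gh)}=f_{g^{-1},h}.
\]
Hence the hypothesis $f_{g,gh}\subseteq\Bases$ is literally the statement $f_{g,gh}\cup f_{g^{-1},h}\subseteq\Bases$, a union of two (a priori equal) orbits. Now I would apply Proposition~\ref{prop:dim3exchange} with the first orbit taken to be $f_{g,gh}$ — so that the roles of $(g,h)$ there are played by $(g,gh)$ — and the second orbit taken to be $f_{g^{-1},h}$ — so that the roles of $(g',h')$ are played by $(g^{-1},h)$. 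The proposition then outputs $f_{g,g^{-1}}\subseteq\Bases$ or $f_{g,h}\subseteq\Bases$, which is exactly the assertion.

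The only point needing attention is the bookkeeping of degenerate cases, namely those in which one of the index pairs $(g,gh)$, $(g^{-1},h)$, $(g,g^{-1})$, $(g,h)$ fails to be a valid orbit label (for instance $g^2=e$, or $g=h$, or $h=g^{-1}$). These coincidences are precisely the non-distinct cases already handled in the proof of Proposition~\ref{prop:dim3exchange}: when two of the four indices agree, two of the orbits involved are equal and the conclusion collapses to one of its disjuncts. So I do not expect a genuine obstacle; the corollary really is immediate, and the only substantive choice is selecting the correct rewriting of $f_{g,gh}$ from Proposition~\ref{prop:dim3orbit}, namely the one with first index $g^{-1}$ and second index $h$, so that Proposition~\ref{prop:dim3exchange} produces the pair $\{f_{g,g^{-1}},f_{g,h}\}$ rather than some other pair.
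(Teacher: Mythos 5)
Your proof is correct and is essentially identical to the paper's: both rewrite $f_{g,gh}=f_{g^{-1},h}$ via Proposition~\ref{prop:dim3orbit} and then apply Proposition~\ref{prop:dim3exchange} to the pair of (equal) orbits to conclude $f_{g,g^{-1}}$ or $f_{g,h}\subseteq\Bases$. Your extra remark that the degenerate index coincidences are already absorbed into the case analysis of Proposition~\ref{prop:dim3exchange} is accurate and matches the paper's implicit reliance on that fact.
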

 \begin{proof}
     By Proposition~\ref{prop:dim3orbit}, we know $f_{g,gh}=f_{g^{-1},h}$. The result follows directly from applying Proposition~\ref{prop:dim3exchange}.
 \end{proof}

A straightforward application this basis exchange property provides an important reduction of exponents in the indices of the orbits. 

\begin{proposition}\label{prop:gg2} Let $\Bases$ be the set of bases of a matroid $M_\bv=(G,\Bases)$ corresponding to a $G$-fixed tropical Pl\"ucker vector $\bv\in\bigwedge^3\BG$. Then $f_{g,g^k}\subseteq\Bases\implies f_{g,g^2}\subseteq\Bases$ for $k\geq 2$.
    
\end{proposition}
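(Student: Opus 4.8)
The plan is a straightforward induction on $k$, peeling off one factor of $g$ at each step via Corollary~\ref{prop:dim3sumdiff}. The base case $k=2$ is a tautology, so assume $k\geq 3$ and that the implication has been established for $k-1$. Writing $g^{k}=g\cdot g^{k-1}$, we have $f_{g,g^{k}}=f_{g,\,g\cdot g^{k-1}}$, so Corollary~\ref{prop:dim3sumdiff} (with $h=g^{k-1}$) gives that $f_{g,g^{k}}\subseteq\Bases$ forces $f_{g,g^{-1}}\subseteq\Bases$ or $f_{g,g^{k-1}}\subseteq\Bases$. In the latter case the inductive hypothesis immediately yields $f_{g,g^{2}}\subseteq\Bases$, as desired.

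The one thing worth isolating is that the former case is not genuinely different: by the last equality in Proposition~\ref{prop:dim3orbit}, applied with $(g,h)$ replaced by $(g,g^{-1})$, we get $f_{g,g^{-1}}=f_{(g^{-1})^{-1},\,(g^{-1})^{-1}g}=f_{g,g^{2}}$. Thus $f_{g,g^{-1}}\subseteq\Bases$ already says $f_{g,g^{2}}\subseteq\Bases$, and both branches close the induction. (Concretely, the reindexing $b=ag^{-1}$ identifies the orbit $\{\,\{a,ag,ag^{-1}\}\mid a\in G\,\}$ with $\{\,\{b,bg,bg^{2}\}\mid b\in G\,\}$.)

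I do not anticipate a real obstacle; the argument is forced once one notices $f_{g,g^{-1}}=f_{g,g^{2}}$, which makes the ``unwanted'' branch of Corollary~\ref{prop:dim3sumdiff} collapse onto the conclusion. The only bookkeeping needed is to confirm that all orbits named along the way are legitimate: $f_{g,g^{k}}$ being an orbit forces $g^{k}\notin\{e,g\}$; if $g^{k-1}=e$ then $g^{k}=g$, which is excluded, and if $g^{k-1}=g$ then $g^{k-2}=e$, so $g^{k}=g^{2}$ and the conclusion coincides with the hypothesis. Hence the induction proceeds without degenerate interruptions (and in any case exponents may be read modulo $\ord(g)$).
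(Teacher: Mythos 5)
Your proof is correct and follows essentially the same route as the paper: induction on $k$, splitting via Corollary~\ref{prop:dim3sumdiff} into the cases $f_{g,g^{-1}}\subseteq\Bases$ or $f_{g,g^{k-1}}\subseteq\Bases$, and collapsing the first case using Proposition~\ref{prop:dim3orbit} to get $f_{g,g^{-1}}=f_{g,g^2}$. Your extra check that no degenerate orbits arise along the way is a small addition the paper omits, but the argument is the same.
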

\begin{proof}
    We proceed by induction. The case $k=2$ is a tautology. Assume for induction hypothesis that $f_{g,g^{k-1}}\subseteq\Bases\implies f_{g,g^2}\subseteq\Bases$. Then \[f_{g,g^{k}}\subseteq\Bases\implies f_{g,g^{-1}}\text{ or } f_{g,g^{k-1}}\subseteq\Bases\] by Corollary~\ref{prop:dim3sumdiff}. In the first case, we know $f_{g,g^{-1}}=f_{g^{-1},g}$ and \[f_{g^{-1},g}=f_{g,g^2}\] using Proposition~\ref{prop:dim3orbit}. The second case follows immediately from the induction hypothesis.
\end{proof}

\subsection{Subrepresentations in dimension 3}
 Recall that we defined a notation $f_S=\bigcup_{g\in S}f_g$ for the union of orbits in dimension 2 indexed by a subset $S\subseteq G$. We adopt a similar notation in dimension 3:
    \[f_S=\bigcup_{g,h,g^{-1}h\in S} f_{g,h}.\]
Our main theorem in dimension 3 gives an attempt to extend our dimension 2 classification to three dimensional tropical subrepresentations of $\BG$ coming from proper subgroups of $G$.

\begin{theorem}\label{thm:dim3subgroups}
    Let $G$ be a finite group, and let $H$ be a subgroup of $G$ with $[G:H]>2$. Then there is a matroid $M_\bv=(G,f_{G-H})$ corresponding to a $G$-fixed tropical Pl\"ucker vector $\bv\in\bigwedge^3\BG$.
\end{theorem}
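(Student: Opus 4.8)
The plan is to verify directly that $f_{G-H}$, defined as $\bigcup_{g,h,g^{-1}h\in G-H} f_{g,h}$, satisfies the strong basis exchange axiom, since nonemptiness is immediate once $[G:H]>2$ guarantees at least two distinct nontrivial cosets and hence a valid triple $\{g,h\}$ with $g,h,g^{-1}h$ all outside $H$. The key observation is a ``coset-difference'' reformulation: a triple $\{a,b,c\}$ lies in $f_{G-H}$ exactly when the three pairwise quotients $a^{-1}b$, $a^{-1}c$, $b^{-1}c$ all lie in $G-H$, which by the subgroup property of $H$ is equivalent to saying $a$, $b$, $c$ lie in three \emph{pairwise distinct} left cosets of $H$ (viewing $G$ with the equivalence relation $x\sim y \iff x^{-1}y\in H$, i.e. $xH=yH$). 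This is the natural generalization of the dimension-2 picture, where $f_{G-H}$ was the pairs lying in distinct cosets.

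\textbf{Reduction to a matroid on cosets.} Once this reformulation is in hand, I would recognize the matroid: $f_{G-H}$ is precisely the set of transversals of size $3$ to the partition of $G$ into left $H$-cosets, i.e. the rank-$3$ \emph{partition matroid} (transversal matroid) on ground set $G$ whose parts are the cosets $g_1H,\dots,g_kH$, where $k=[G:H]>2$. A set is a basis iff it has size $3$ and meets each part in at most one element; this is a well-known matroid (a direct-sum/truncation description: the transversal matroid of the partition, truncated to rank $3$). Its $G$-invariance is automatic since left multiplication by $G$ permutes the left cosets of $H$ and hence preserves the property of being a partial transversal. Thus the theorem reduces to the standard fact that partial transversals of a set partition form the independent sets of a matroid, together with the remark that truncating to rank $3$ keeps it a matroid.

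\textbf{Carrying out basis exchange explicitly.} If I instead wanted a self-contained elementary proof in the spirit of the paper's other arguments, I would check exchange by hand. Take bases $A=\{a_1,a_2,a_3\}$ and $B=\{b_1,b_2,b_3\}$ in $f_{G-H}$ and an element, say $a_3\in A-B$. By the cases already enumerated before Proposition~\ref{prop:dim3exchange}, only the cases where $A,B$ share at most one element require new work; in those cases I must produce $b_j\in B-A$ with $\{a_1,a_2,b_j\}\in f_{G-H}$, i.e. with $a_1,a_2,b_j$ in three distinct cosets. Since $a_1,a_2$ already lie in distinct cosets, it suffices to find $b_j\in B-A$ whose coset differs from those of $a_1$ and $a_2$. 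But $b_1,b_2,b_3$ occupy three distinct cosets (as $B$ is a basis), so at most two of them can fall in the forbidden cosets $a_1H$ and $a_2H$; hence at least one $b_j$ has $b_j H\notin\{a_1H,a_2H\}$, and that $b_j$ is automatically not equal to $a_1$ or $a_2$. One then checks it is genuinely in $B-A$ (if $b_j=a_3$ the triple $\{a_1,a_2,a_3\}=A$ works anyway, or one passes to another admissible $b_j$), completing the exchange. The only mild subtlety — the main obstacle — is bookkeeping the degenerate overlaps so that the chosen $b_j$ really lies in $B-A$ and really gives a \emph{new} basis; but this is forced by a counting argument (three cosets, two forbidden) and never fails.

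\textbf{Main obstacle.} The substantive content is entirely in the reformulation ``$f_{G-H}$ = partial transversals of the coset partition''; once that identification is made, matroidality is either a textbook fact or a two-line counting argument. I expect the write-up's only delicate point to be handling the cases where $A$ and $B$ overlap in one element, ensuring the replacement element is chosen outside $A$ and outside the two cosets already used — a finite check with no real difficulty, exactly parallel to the dimension-2 proof of Theorem~\ref{thm:main}.
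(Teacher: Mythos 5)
Your proposal is correct, and its first route is genuinely different from the paper's. The paper proves matroidality of $f_{G-H}$ by contradiction: it assumes basis exchange forces a triple containing a quotient in $H$, and then manipulates the quotients directly (e.g.\ deducing $x_2^{-1}y_2, x_2^{-1}y_3\in H \implies y_2^{-1}y_3\in H$) to reach a contradiction, splitting into the one-shared-element and disjoint cases. Your key reformulation --- $\{a,b,c\}\in f_{G-H}$ iff $a,b,c$ lie in three pairwise distinct left cosets of $H$ --- is implicit in the paper, but you push it further by recognizing $f_{G-H}$ as the set of bases of the rank-$3$ truncation of the partition (transversal) matroid on the left cosets of $H$. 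That identification makes matroidality and $G$-invariance immediate, cleanly explains why $[G:H]>2$ is needed (with only two cosets the partition matroid has rank $2$, so there are no size-$3$ transversals), and visibly generalizes to dimension $d$ whenever $[G:H]\geq d$; the paper's hands-on argument buys nothing extra here beyond staying within the elementary toolkit used elsewhere in the paper. Your explicit exchange check is essentially the paper's argument repackaged as a counting statement (three cosets for $B$, at most two forbidden), which is tidier; note only that your worry about the replacement element equalling $a_3$ is vacuous, since $a_3\in A-B$ means $a_3\notin B$, so any $b_j\in B$ in a coset distinct from $a_1H$ and $a_2H$ automatically lies in $B-A$.
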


\begin{proof}
    We prove this by contradiction. Assume on the contrary for some $X=\{x_1,x_2,x_3\},Y=\{y_1,y_2,y_3\}\in\Bases$ we can apply basis exchange to force the inclusion of an orbit containing a difference in $H$. Note that by definition, $x_1^{-1}x_2,\ x_1^{-1}x_3,\ x_2^{-1}x_3,\ y_1^{-1}y_2,\ y_1^{-1}y_3,\ y_2^{-1}y_3$ must be in $G-H$. Their inverses must also be in $G-H$, since it's closed under taking inverses.

    Note that neither $X$ nor $Y$ contains a difference in $H$ by our assumption. The case where $[G:H]=2$ is impossible here. If $H$ has index 2, the there are two cosets: $H$ and $aH$. Assume on the contrary that $x_1^{-1}x_2,\ x_1^{-1}x_3,\ x_2^{-1}x_3$ in $G-H=aH$. Then $x_1^{-1}x_2=ah_1$ and $x_1^{-1}x_3=ah_2$ with $h_1,h_2\in H$. Then, $x_2^{-1}x_3=x_2^{-1}x_1x_1^{-1}x_3=(x_1^{-1}x_2)^{-1}x_1^{-1}x_3=(ah_1)^{-1}ah_2=h_1^{-1}h_2\in H$.
    
    Now, we assume $[G:H]>2$.
    \begin{enumerate}
        \item $X$ and $Y$ share one element, $x_1=y_1$. $\{x_1,x_2,x_3\},\{y_1,y_2,y_3\}\in\Bases\implies \{x_1,x_2,y_2\}$ or $\{x_1,x_2,y_3\}\in\Bases$.
        Since we've assumed we can force the inclusion of an orbit containing a difference in $H$, there must be a difference in $H$ in both sets. So the first set must include a difference in $H$. There are three possibilities:
        \begin{enumerate}
            \item $x_1^{-1}x_2\in H$ is a contradiction;
            \item $x_1^{-1}y_2\in H$ is also a contradiction since $x_1^{-1}y_2=y_1^{-1}y_2\in H$
            \item $x_2^{-1}y_2\in H$ is the only remaining case.
        \end{enumerate}
        Similarly we can show that $x_2^{-1}y_3\in H$. However, then $y_2^{-1}y_3=y_2^{-1}x_2x_2^{-1}y_3=(x_2^{-1}y_2)^{-1}x_2^{-1}y_3\in H$       giving a contradiction.
        \item $X$ and $Y$ are disjoint. $\{x_1,x_2,x_3\},\{y_1,y_2,y_3\}\in\Bases\implies \{x_1,x_2,y_1\}$ or $\{x_1,x_2,y_2\}$ or $\{x_1,x_2,y_3\}\in\Bases$.
        Since $d$ must divide a difference in each set, it must divide a difference in the first set; there are again three possibilities:
        \begin{enumerate}
            \item $x_1^{-1}x_2\in H$ is a contradiction;
            \item if $x_1^{-1}y_1\in H$, note that $x_1^{-1}y_2$ or $x_1^{-1}y_3\in H$ would give a contradiction as in case (c) above, so it must be the case that $x_2^{-1}y_2$ and $x_2^{-1}y_3\in H$, which gives us the contradiction we were trying to avoid; and finally
            \item the case where $x_2^{-1}y_1\in H$ gives a similar contradiction.
        \end{enumerate}
    \end{enumerate}
\end{proof}

Unlike in Theorem~\ref{thm:main}, this is not an equivalence. Indeed there exist matroids in dimension three that do not come from complements of subgroups, suggesting the theory is more involved. In Section~\ref{sec:3dcyclic} we will study the cyclic case to identify some of these subrepresentations.

\subsection{Results in the cyclic case} \label{sec:3dcyclic} Our final theorems in dimension 3 are restricted to the cyclic case $G=\Z_n$. We classify a number of tropical subrepresentations that appear and give a collection of orbits indexed by the generators of the group that will be always be subsets of the bases of a matroid corresponding to a tropical subrepresentations. While not a complete classification, these results make apparent the combinatorics that appear in higher dimension and hopefully provide a path for further results in higher dimensions. 

As in the dimension 2 example given in Section~\ref{ex:Zn}, we will work using additive notation. The orbits take the form 
\[f_{i,j} = \{\{a,a+i,a+j\} | a\in \Z_n\} = \{\{a,b,c\} | b-a=i\text{ and }c-a = j\}\]
for $0<i<j<n$. We begin with two lemmas to aid in the proof of Theorem~\ref{thm:3d}. This first lemma provides a useful bound on the indices of $f_{i,j}$.

\begin{lemma}\label{lem:smallestindex}
    Let $G=\Z_n$ be a cyclic group and $u\in\Z_n^\times$ a unit. Let $f_{i,j}$ be an orbit of the $G$ action on ${{G}\choose{3}}$, with $0<i<j<n$. Let $I=\{k\mid f_{ku,lu}=f_{i,j}, 0<k<l<n\}$. Then $I$ has an element $s$ with $3s\le n$.
\end{lemma}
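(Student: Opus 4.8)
The plan is to exploit Proposition~\ref{prop:dim3orbit}. An orbit $f_{i,j}$ is the translation orbit of the triple $\{0,i,j\}\subseteq\Z_n$, and Proposition~\ref{prop:dim3orbit} records that this triple may be ``re-rooted'' at any of its three elements; scaling all indices by the unit $u$ turns this into three equivalent ways of writing $f_{i,j}$ in the form $f_{ku,lu}$. The claim I want is that the three resulting values of $k$ (normalized to lie in $\{1,\dots,n-1\}$, with $k<l$) are precisely the lengths of the three arcs into which $\{0,i,j\}$ cuts the cycle $\Z_n$; since these arc lengths sum to $n$, the smallest of them is at most $n/3$ and is the required $s$.

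Concretely, the steps are as follows. First note $I\neq\emptyset$: the elements $u^{-1}i,u^{-1}j\in\Z_n$ are distinct and nonzero, and their products with $u$ recover $i,j$, so reordering the pair $(u^{-1}i,u^{-1}j)$ into $\{1,\dots,n-1\}$ with the smaller entry first (using $f_{g,h}=f_{h,g}$) exhibits an element of $I$. Next, fix any $k_0\in I$ with a witness $l_0$, so $0<k_0<l_0<n$ and $f_{k_0u,\,l_0u}=f_{i,j}$. Applying Proposition~\ref{prop:dim3orbit} to $f_{k_0u,l_0u}$ and reducing indices modulo $n$ with the smaller first gives
\[
f_{(l_0-k_0)u,\ (n-k_0)u}=f_{i,j}\qquad\text{and}\qquad f_{(n-l_0)u,\ (n-l_0+k_0)u}=f_{i,j}.
\]
One checks directly that each new index pair again satisfies $0<(\text{first})<(\text{second})<n$ — this is exactly where the hypothesis $0<k_0<l_0<n$ is used — so that $l_0-k_0\in I$ and $n-l_0\in I$. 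Since $k_0+(l_0-k_0)+(n-l_0)=n$ with all three summands positive, the smallest of $k_0,\,l_0-k_0,\,n-l_0$ is at most $n/3$ and lies in $I$; this value is the desired $s$.

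I do not anticipate a genuine obstacle here. The only delicate point is the routine bookkeeping: confirming that each re-rooted-and-rescaled pair genuinely lands in the range $0<(\text{first})<(\text{second})<n$ so that it contributes to $I$, and observing that if two of the three derived values happen to coincide (which occurs exactly when $\{0,i,j\}$ has a nontrivial translation symmetry, e.g.\ $\{0,p,2p\}$ with $n=3p$) this merely shrinks $I$ without affecting the conclusion.
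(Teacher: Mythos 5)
Your proof is correct, and although it rests on the same two ingredients as the paper's argument --- re-rooting an orbit at each of its three elements via Proposition~\ref{prop:dim3orbit}, and the fact that the three arc lengths of a triple in $\Z_n$ sum to $n$ --- the order in which you apply them is genuinely different, and as it happens more robust. The paper re-roots the triple $\{0,i,j\}$ first and only then divides by $u$: it sets $a=u^{-1}i$, $b=u^{-1}(j-i)$, $c'=n-u^{-1}j$ (representatives in $(0,n)$) and asserts $a+b+c'=n$. But that identity is only a congruence modulo $n$; once $u\neq 1$ the chosen representatives can sum to $2n$ (for $n=5$, $u=2$, $(i,j)=(1,2)$ one gets $a=b=3$, $c'=4$, whose minimum $3$ violates $3s\le n$), and moreover $a,b,c'$ need not individually lie in $I$, since membership in $I$ requires being the \emph{smaller} index of an admissible pair --- a constraint the paper never checks. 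Your version divides by $u$ first, producing an honest element $k_0\in I$ with witness $l_0$ (and in particular settling $I\neq\emptyset$), and only then re-roots the integer triple $\{0,k_0,l_0\}$; the three candidates $k_0$, $l_0-k_0$, $n-l_0$ are then literal arc lengths summing to exactly $n$, and your explicit verification that each re-rooted pair satisfies $0<(\text{first})<(\text{second})<n$ is precisely the bookkeeping needed to place them in $I$. In short: same underlying idea, but your sequencing closes a real gap in the paper's own proof rather than merely restating it.
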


\begin{proof}

    Recall $f_{i,j}=f_{-i,j-i}=f_{-j,i-j}$ by Proposition~\ref{prop:dim3orbit}. Note also that $f_{-i,j-i} = f_{j-i,-i}$ since we can switch the order of indices.

    Write $i=au,\ j-i=bu,\ -j=cu$. We know $0<a,b<n$ since $0<i<j<n$. We also know $-n<c<0$ since $0<j<n,$ so we let $c'=c+n$ to ensure $0<c'<j$. Then, $a,b,c'\in I$.

    We also know that $i+(j-i)-i=0$, so $a+b+c=0$. Then $a+b+c'=a+b+c+n=n$. 
    
    Let $s$ be the smallest element of $\{a,b,c'\}\subseteq I$. Note that $0<s$. Then, $0<3s\le a+b+c'=n$.
\end{proof}

In the proof of Theorem~\ref{thm:3d} we will need to split into even and odd cases for $n$. The next lemma will help us deal with the even case.

\begin{lemma}\label{lem:dim3n/2}
    Let $\Bases$ be the set of bases of a matroid $M_\bv=([n],\Bases)$ corresponding to a $\Z_n$-fixed tropical Pl\"ucker vector $\bv\in\bigwedge^3\BZn$. If $n=2k$, $k>2$, and $u\in\Z_n^\times$, then  \[f_{u,ku}\subseteq\Bases\iff f_{u,(k+1)u}\subseteq\Bases.\] 
\end{lemma}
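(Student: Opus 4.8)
The plan is to establish the biconditional by exploiting the special arithmetic of the index $k = n/2$, together with the orbit identities from Proposition~\ref{prop:dim3orbit} and the exchange/reduction tools of Corollary~\ref{prop:dim3sumdiff} and Proposition~\ref{prop:gg2}. Since $u$ is a unit, it suffices to observe that scaling all indices by $u^{-1}$ carries an orbit $f_{ui, uj}$ to $f_{i,j}$ and preserves matroidality (the $\Z_n$-action and the multiplication-by-$u$ action commute on $\binom{\Z_n}{3}$, so $f_{ui,uj}$ is matroidal iff $f_{i,j}$ is). Thus it is enough to prove $f_{1,k} \subseteq \Bases \iff f_{1,k+1} \subseteq \Bases$, after which one applies the unit-scaling to get the stated form with $u$ reinserted. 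Here the key fact is $2k = n$, i.e.\ $k \equiv -k \pmod n$ and $k+1 \equiv -(k-1) \pmod n$.

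For one direction, assume $f_{1,k} \subseteq \Bases$. Using Proposition~\ref{prop:dim3orbit} I rewrite $f_{1,k}$ with a shifted base point: $f_{1,k} = f_{-1,k-1} = f_{k-1,-1} = f_{k-1,n-1}$ (switching the two index slots is allowed). Now I want to produce $f_{1,k+1}$. The idea is to take a second copy of an orbit already known to lie in $\Bases$ — either $f_{1,k}$ itself or one of its Proposition~\ref{prop:dim3orbit}-translates — and run Proposition~\ref{prop:dim3exchange} on the two orbits to force a new one. Concretely, I expect to combine $f_{1,k}$ with $f_{k,k+1}$-type data: note $f_{1,k} = f_{k-1, n-1}$ and, since $n=2k$, $k-1 = k+1-2 \equiv (k+1) + (k-1)\cdot(\text{something})$... the cleaner route is to use Corollary~\ref{prop:dim3sumdiff} in the form $f_{g,gh}\subseteq\Bases \implies f_{g,g^{-1}}$ or $f_{g,h}\subseteq\Bases$ with $g = 1$: then $f_{1, 1+h} \subseteq \Bases \implies f_{1,-1} = f_{1,n-1}$ or $f_{1,h} \subseteq \Bases$, and conversely I want to \emph{add}, which Proposition~\ref{prop:dim3exchange} does. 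So: apply Proposition~\ref{prop:dim3exchange} to $f_{1,k}\cup f_{1,k}$ — but with the two copies presented via \emph{different} base points using Proposition~\ref{prop:dim3orbit}, say $f_{1,k}$ (slots $g=1$, $h=k$) and $f_{-1,k-1}$ (slots $g'=-1$, $h'=k-1$). Proposition~\ref{prop:dim3exchange} then yields $f_{g,g'} = f_{1,-1} = f_{1,n-1}$ or $f_{g,h'} = f_{1,k-1}$ in $\Bases$. One continues along these lines, the governing identity being that under $2k=n$ the indices $k$, $k-1$, $k+1$, $n-1$ are tightly linked; I would track through which combination of Proposition~\ref{prop:dim3exchange} outputs actually lands on $f_{1,k+1}$ (after a final Proposition~\ref{prop:dim3orbit} rewrite), possibly passing through the intermediate orbit $f_{2,k+1}$ or $f_{1,2k-1}$ and invoking Proposition~\ref{prop:gg2} to kill an unwanted square. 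The reverse direction ($f_{1,k+1}\subseteq\Bases \implies f_{1,k}\subseteq\Bases$) is symmetric: $k$ and $k+1$ play interchangeable roles once we pass to $f_{-1,\cdot}$ representatives, since $f_{1,k+1} = f_{-1,k} = f_{k,n-1}$ and $f_{1,k} = f_{-1,k-1} = f_{k-1,n-1}$, and both are ``distance one'' from the self-paired orbit $f_{1,n-1}$ in the same way.

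The main obstacle will be bookkeeping: ensuring that when I feed two orbits into Proposition~\ref{prop:dim3exchange} the four indices $g,h,g',h'$ are genuinely the ones I claim (the proposition's hypothesis $f_{g,h}\cup f_{g',h'}\subseteq\Bases$ allows $g'=g$ etc., and Proposition~\ref{prop:dim3orbit} gives three presentations of each orbit, so there is real freedom to exploit), and that the forced orbit $f_{g,g'}$ or $f_{g,h'}$ is not the trivial ``already in $\Bases$'' case but genuinely $f_{1,k+1}$ up to a Proposition~\ref{prop:dim3orbit} identification — this requires the hypotheses $k>2$ (so that the relevant indices $1, 2, k-1, k, k+1, n-1$ are pairwise distinct and nonzero mod $n$, hence the orbits are well-defined and the non-degenerate case of Proposition~\ref{prop:dim3exchange} applies) and $2k=n$ (which is what makes $k+1$ and $k-1$ symmetric about the fixed point of $x\mapsto -x$ shifted appropriately). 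I would first nail down the exact chain of orbit identities forced by $2k=n$ on paper, then present the forward direction as a short sequence of three or four rewrite-and-exchange steps, and finally note the reverse direction follows by the evident symmetry $k \leftrightarrow k+1$ under $x \mapsto -x$.
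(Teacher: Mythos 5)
Your overall strategy is the right one --- take two distinct presentations of the single orbit $f_{u,ku}$ supplied by Proposition~\ref{prop:dim3orbit} and feed them into Proposition~\ref{prop:dim3exchange} --- and this is exactly what the paper does. But the proof as written has a genuine gap at its one load-bearing step: you never exhibit an exchange that actually forces $f_{u,(k+1)u}$, and the one concrete exchange you do write down does not work. Taking $g=1$, $h=k$, $g'=-1$, $h'=k-1$ and exchanging $h$ out of $\{0,1,k\}$ yields ``$f_{1,-1}$ or $f_{1,k-1}$,'' and since Proposition~\ref{prop:dim3exchange} only delivers a disjunction, you cannot conclude that either orbit individually lies in $\Bases$ --- and in any case neither equals $f_{1,k+1}$ in general ($f_{1,-1}=f_{1,2}$, and $f_{1,k-1}=f_{k+1,k+2}\neq f_{1,k+1}$). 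The phrases ``one continues along these lines'' and ``I would track through which combination\ldots actually lands on $f_{1,k+1}$'' are precisely where the proof is missing; the whole content of the lemma is that such a combination exists.

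The exchange that works is the other one: keep $0$ and $ku$ and swap out $u$. From the representatives $\{0,u,ku\}\in f_{u,ku}$ and $\{0,-u,(k-1)u\}\in f_{-u,(k-1)u}=f_{u,ku}$ (disjoint away from $0$ precisely because $k>2$), basis exchange forces $\{0,ku,-u\}\in\Bases$ or $\{0,ku,(k-1)u\}\in\Bases$, i.e.\ $f_{-u,ku}\subseteq\Bases$ or $f_{(k-1)u,ku}\subseteq\Bases$. The point of the hypothesis $n=2k$ is that \emph{both} branches of this disjunction are the target: $f_{-u,ku}=f_{u,(k+1)u}$ and $f_{(k-1)u,ku}=f_{u,(k+1)u}$, using $-k\equiv k$ and $1-k\equiv k+1 \pmod{2k}$ in Proposition~\ref{prop:dim3orbit}. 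With that identity in hand the forward direction is a two-line argument and the reverse direction is symmetric, as you guessed. Two smaller remarks: your opening reduction to $u=1$ via unit-scaling is true but unproved as stated and is unnecessary (the argument is uniform in $u$); and Proposition~\ref{prop:gg2} plays no role here, so invoking it ``to kill an unwanted square'' signals that the intended chain of deductions was not actually found.
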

\begin{proof}
    Assuming $f_{u,ku}\subseteq\Bases$, we know by Proposition~\ref{prop:dim3orbit} that
    \[f_{u,ku}=f_{-u,(k-1)u}=f_{ku,(k+1)u}\]
    and
    \[f_{u,(k+1)u}=f_{-u,ku}=f_{(k-1)u,ku}\] 
    (using the assumption that $n=2k$ and working modulo $n$ in the indices).
    When $k>2$, $u\ne ku\ne -u\ne (k-1)u$, and we can use basis exchange in the following way: 
    \[f_{u,ku}\cup f_{-u,(k-1)u}\subseteq\Bases\implies f_{-u,ku}\text{ or }f_{(k-1)u,ku}\subseteq\Bases\] 
    Notice the orbits on the left are $f_{u,ku}$ and the ones on the right are $f_{u,(k+1)u}$, so we know 
    \[f_{u,ku}\subseteq\Bases\implies f_{u,(k+1)u}\subseteq\Bases\]
    The other direction is similar.
\end{proof}

\begin{theorem}\label{thm:3d}
    Let $G=\Z_n$ be a cyclic group, and let $\Bases$ be the set of bases of a matroid $M_\bv=([n],\Bases)$ corresponding to a $\Z_n$-fixed tropical Pl\"ucker vector $\bv\in\bigwedge^3\BZn$. Then, for any $u\in\Z_n^\times$, $f_{u,2u}\subseteq\Bases$.
\end{theorem}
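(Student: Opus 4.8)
\textit{Setup and reduction.} The plan is to fix the unit $u$ and run a descent on a single integer invariant. Since $u$ is a unit, every orbit can be written $f_{ku,lu}$ with $k,l\in\{1,\dots,n-1\}$, and after reordering via Proposition~\ref{prop:dim3orbit} we may take $k<l$; call $k$ a \emph{$u$-index} of the orbit. As $\Bases$ is a nonempty union of orbits, the set of $u$-indices occurring in it is nonempty; let $s$ be its minimum. By Lemma~\ref{lem:smallestindex} the orbit attaining $s$ has a representation with $u$-index $\le n/3$, and since every $u$-index occurring in $\Bases$ is $\ge s$, we get $3s\le n$. It suffices to prove $s=1$, for then $f_{u,lu}\subseteq\Bases$ for some $l$, and writing $lu=mu$ with $m\ge 2$ and applying Proposition~\ref{prop:gg2} gives $f_{u,2u}\subseteq\Bases$.

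\textit{Step 1: a Euclidean descent.} Assume $s\ge 2$ and fix $f_{su,lu}\subseteq\Bases$ with $s<l<n$; applying minimality to the three $u$-indices $s$, $l-s$, $n-l$ of this orbit (Proposition~\ref{prop:dim3orbit}) forces $2s\le l\le n-s$. Corollary~\ref{prop:dim3sumdiff} with $g=su$, together with $f_{g,g^{-1}}=f_{g,g^{2}}$ (a consequence of Proposition~\ref{prop:dim3orbit}), gives $f_{su,2su}\subseteq\Bases$ or $f_{su,(l-s)u}\subseteq\Bases$, and iterating the second alternative lets the second $u$-index run through the coset $l+\langle s\rangle\subseteq\Z_n$. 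If $\gcd(s,n)\nmid l$, this coset contains an integer $r$ with $1\le r<\gcd(s,n)\le s$, so $f_{su,ru}\subseteq\Bases$; after reordering this is an orbit with $u$-index $r<s$, contradicting minimality. (When $\gcd(s,n)\mid l$ the iteration simply runs into the index $2s$.) Hence we may assume $f_{su,2su}\subseteq\Bases$.

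\textit{Step 2: ruling out $s\ge 2$.} Deriving a contradiction from $f_{su,2su}\subseteq\Bases$ with $s\ge 2$ is the heart of the argument, and I expect it to be the main obstacle. The bases of $f_{su,2su}$ are the arithmetic progressions $\{a,a+su,a+2su\}$, each contained in a single coset of the subgroup $H=\langle su\rangle$, which has index $\gcd(s,n)$; since $\{0,su,2su\}$ is a basis of $M$, the subgroup $H$ spans the rank-$3$ matroid $M$, while $H$ is proper as soon as $\gcd(s,n)>1$. In the extreme case $n=3s$ one gets $\Bases=f_{su,2su}$ — the cosets of a $3$-element subgroup — and strong basis exchange fails between two distinct cosets, a contradiction. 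For $n>3s$ one exploits that $H$ spans $M$ yet is proper: strong basis exchange (Proposition~\ref{prop:dim3exchange}) between a basis contained in $H$ and one meeting another coset forces a basis straddling two cosets, and a case analysis of the orbit it generates — split according to $n$ modulo $s$, with Lemma~\ref{lem:dim3n/2} handling the configurations organized around the antipodal index $n/2$ (in particular $n=4s$), and induction on $n$ applied to the restriction $M|_{H}$ — should produce an orbit carrying a $u$-index strictly below $s$, the sought contradiction. The bookkeeping here, including the separate treatment needed when $\gcd(s,n)=1$ (so that $su$ is a unit and no proper coset structure is available), is the delicate part of the proof.
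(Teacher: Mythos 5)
Your setup and Step 1 track the paper's proof closely: the paper also defines $I=\{k\mid f_{ku,lu}\subseteq\Bases,\ 0<k<l<n\}$, takes its least element $s$, invokes Lemma~\ref{lem:smallestindex} to get $3s\le n$, and uses a Euclidean-style reduction (long division $t=sq+r$ plus Corollary~\ref{prop:dim3sumdiff} and Proposition~\ref{prop:gg2}) to land in the case $f_{su,2su}\subseteq\Bases$. Up to that point your argument is sound and essentially the same as the paper's.

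The genuine gap is Step 2, which you yourself flag as unfinished (``should produce an orbit\dots'', ``the delicate part of the proof''). Nothing in your sketch actually derives the contradiction, and the route you gesture at (cosets of $H=\langle su\rangle$, induction on $n$ via $M|_H$, separate handling of $\gcd(s,n)=1$) is both uncarried-out and more complicated than needed; your claim that $n=3s$ forces $\Bases=f_{su,2su}$ is also unjustified, since you only know containment. The missing idea is a single, concrete exchange: from $f_{su,2su}\subseteq\Bases$ take the basis $\{0,su,2su\}$ and its translate by the unit $u$, namely $\{u,(s+1)u,(2s+1)u\}$, which also lies in $f_{su,2su}$ and is disjoint from the first because $s\ge 2$ gives $2s+1<3s\le n$. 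Strong basis exchange (replacing $(2s+1)u$ in the second basis) forces one of $\{0,u,(s+1)u\}$, $\{u,su,(s+1)u\}$, $\{u,(s+1)u,2su\}$ into $\Bases$; by Proposition~\ref{prop:dim3orbit} these generate orbits with $u$-index $1$, $1$ (via $f_{(s-1)u,su}=f_{u,(1-s)u}$), and $s-1$ respectively, each contradicting the minimality of $s\ge 2$. Note this argument never needs $\gcd(s,n)$ or any coset structure, so the case split you anticipate does not arise.
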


\begin{proof}
    
    Let $I=\{k\mid f_{ku,lu}\subseteq\Bases\text{ for }0<k<l<n\}$.
    Since $I$ must be non-empty as the set of bases is nonempty and contains at least one orbit, we know that $I$ must have a least element $s$. We will show that $s=1$. Assume on the contrary $s>1$.
    
    We know $3s\le n$ by Lemma~\ref{lem:smallestindex}. Since $1<s$ we have $2s+1<3s\le n$. Since $s\in I$ there is an orbit  $f_{su,tu}\subseteq\Bases$ for some $t>s>1$, so we can simply long divide $t=sq+r$ for some $1\le q$, $0\le r<s$. 
    
    In the case $r=0$, we have $t=sq$ and thus $f_{su,(sq)u}\subseteq\Bases$ for some $2\le q$.  
    Proposition~\ref{prop:gg2} gives $f_{su,(2s)u}\subseteq\Bases$, so we must have basis elements
    \[\{0,su,(2s)u\},\{u,(s+1)u,(2s+1)u\}\in f_{su,(2s)u}\subseteq\Bases.\] Since $2s+1<n$ these are disjoint. Basis exchange gives 
    \[\{0,u,(s+1)u\}\in\Bases\text{ or }\{u,su,(s+1)u\}\in\Bases\text{ or }\{u,(s+1)u,(2s)u\}\in\Bases.\]
    If $\{0,u,(s+1)u\}\in \Bases$ then $f_{u,(s+1)u}\subseteq \Bases$ and $1\in I$. If $\{u,su,(s+1)u\}\in \Bases$ then $f_{(s-1)u,su}\subseteq\Bases.$ By Proposition~\ref{prop:dim3orbit}, $f_{(s-1)u,su}=f_{(1-s)u,u}=f_{u,(1-s)u}\subseteq\Bases$ and again $1\in I$. Finally, if $\{u,(s+1)u,(2s)u\}\subseteq \Bases$, then $f_{su,(2s-1)u}\subseteq\Bases$. By Proposition~\ref{prop:dim3orbit}, \[f_{su,(2s-1)u}=f_{-su,(s-1)u}=f_{(s-1)u,-su}\subseteq\Bases\] and $s-1\in I$. In all three cases of basis exchange, we obtain a contradiction since $s$ is not the least element of $I$.

    In the case $r\ne 0$, we know $f_{su,(sq+r)u}\subseteq\Bases$. Thus by Corollary~\ref{prop:dim3sumdiff} we must have $f_{su,(-sq)u}$ or $f_{su,ru}\subseteq\Bases$. If $f_{su,(-sq)u}$, we can again use Proposition~\ref{prop:gg2} to show $f_{su,(2s)u}\subseteq\Bases$ and the logic in the previous case to arrive at a contradiction. Otherwise, we know $r<s\in I$, again giving a contradiction.

    So it must be the case that $s=1$, and $f_{u,tu}\subseteq\Bases$ for some $t>1$. Thus $f_{u,2u}\subseteq\Bases$ by Proposition~\ref{prop:gg2}. 
    
\end{proof}

In particular, When $n=p$ is a prime this theorem tells us that all orbits of the form $f_{a,a^2}$ for $a=1,\ldots,p-1$ are contained in the set of bases for every matroid corresponding to a fixed tropical Pl\"ucker vector $\bv\in\bigwedge^3\BZn$. However, not all orbits are of this form. Our next two result provides the full set of bases in several cases for tropical subrepresentations of $\BZn$. 

\begin{theorem}\label{thm:3d2}
For a tropical Pl\"ucker vector $\bv\in\bigwedge^3\BZn$, write $\Bases_\bv$ for the set of bases of its corresponding matroid. 
    \begin{enumerate}
        \item For any $u\in\Z_n^\times$, if $n$ is odd, there exists a tropical Pl\"ucker vector $\bv\in\bigwedge^3\BZn$ with $\Bases_\bv= {{[n]}\choose{3}}-f_{u,ku}$ if and only if $k\ne -1,2,\frac{n+1}{2}$; 
        \item For any $u\in\Z_n^\times$, if $n$ is even, there exists a tropical Pl\"ucker vector $\bv\in\bigwedge^3\BZn$ with $\Bases_\bv={{[n]}\choose{3}}-f_{u,ku}$ if and only if $k\ne -1,2, \frac{n}{2},\frac{n}{2}+1$.
    \end{enumerate}
\end{theorem}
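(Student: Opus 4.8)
The plan is to reduce to the case $u=1$, recast the question combinatorially, and then resolve it by a short case check. First, multiplication by $v=u^{-1}$ is a bijection of $\Z_n$ permuting the ground set $[n]$ and intertwining the shift-by-$g$ action with the shift-by-$vg$ action; it therefore carries $\Z_n$-orbits of triples to $\Z_n$-orbits, carries matroids to matroids (being a relabelling of the ground set), and sends $f_{u,ku}$ to $f_{1,k}$. Since the indicator vector of $\binom{[n]}{3}-f_{u,ku}$ is automatically $\Z_n$-fixed, the theorem amounts to deciding, for each $k$, whether $\binom{[n]}{3}-f_{1,k}$ is the set of bases of a matroid (necessarily of rank $3$, cf.\ Section~\ref{sec:boolwedge}). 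I claim it satisfies strong basis exchange exactly when no two distinct triples of $f_{1,k}$ meet in two points. For the forward implication: given bases $X,Y$ and $x\in X-Y$, if $|Y-X|\ge 2$ then two choices $y_1\ne y_2$ in $Y-X$ would make $X-x+y_1$ and $X-x+y_2$ two distinct non-bases sharing the pair $X-x$, contradicting the hypothesis, so some exchange succeeds; if $|Y-X|=1$ then $X-x+y=Y$. Conversely, if distinct triples $T_1=\{p,q,r\}$, $T_2=\{p,q,s\}$ both lie in $f_{1,k}$ but some third triple of $T_1\cup T_2$ does not, then picking a basis $X=\{p,q,t\}$ with $t\notin\{p,q,r,s\}$ and outside the finitely many triples of $f_{1,k}$ through $\{p,q\}$, and a basis $Y\in\{\{p,r,s\},\{q,r,s\}\}$, exchange from $(X,Y,t)$ fails since its only candidates $X-t+r=T_1$ and $X-t+s=T_2$ are non-bases. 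For $n\ge 5$ the orbit $f_{1,k}$ has exactly $n<\binom n3$ elements, so whenever it is matroidal the complement is nonempty, of rank $3$, and non-uniform.

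The heart of the argument is thus to determine when two distinct triples of $f_{1,k}$ overlap in two points. Writing $S=\{0,1,k\}$, this happens iff $|S\cap(c+S)|\ge 2$ for some $c\ne 0$, which is equivalent to the statement that some nonzero value occurs with multiplicity at least $2$ in the multiset of differences $D=\{\,1,\ -1,\ k,\ -k,\ k-1,\ 1-k\,\}$ (the differences $x-y$ for distinct $x,y\in S$). A short check of which of the six values can coincide — the possibilities are $1=-k$, $1=k-1$, $k=-k$, $k-1=-(k-1)$, and $k=1-k$, i.e.\ $2k=1$ — shows that this occurs precisely for
\[
k\in\{2,\,-1,\,\tfrac{n+1}{2}\}\ \text{ if }n\text{ is odd},\qquad k\in\{2,\,-1,\,\tfrac n2,\,\tfrac n2+1\}\ \text{ if }n\text{ is even},
\]
where $\tfrac{n+1}{2}$ is the unique solution of $2k\equiv 1$ (which exists only for odd $n$), and the two extra solutions for even $n$ come from $k=-k$ and $k-1=-(k-1)$, which force $2k$ or $2(k-1)$ to vanish. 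Together with the first paragraph this already yields the ``if'' direction of both parts.

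For the ``only if'', I must exhibit, for each excluded $k$, the overlapping pair $T_1,T_2$ and the escaping triple used above. When $k=-1$ (equivalently $k=2$, the orbit of three consecutive residues) take $T_1=\{-1,0,1\}$, $T_2=\{0,1,2\}$ and observe $\{-1,0,2\}\notin f_{1,-1}$ for $n\ge 5$; the value $k=\tfrac{n+1}{2}=2^{-1}$ reduces to $k=2$ via the ground-set automorphism ``multiply by $2$''; for even $n$ and $k=\tfrac n2$ take $T_1=\{0,1,\tfrac n2\}$, $T_2=\{0,\tfrac n2,\tfrac n2+1\}$ — they share the pair $\{0,\tfrac n2\}$, which is fixed by the shift $\tfrac n2$ — and observe $\{0,1,\tfrac n2+1\}\notin f_{1,n/2}$ for $n\ge 6$ (the two triples are mirror images of one another, hence in different $\Z_n$-orbits); the case $k=\tfrac n2+1$ is symmetric with the pair $\{1,\tfrac n2+1\}$; and for $n\le 4$ every admissible orbit $f_{1,k}$ equals $\binom{[n]}{3}$, so the complement is empty and never a matroid, which is consistent since all $k$ are excluded there. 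I expect the only real difficulty to be the computation in the middle paragraph, and in particular the even/odd dichotomy in the list of excluded $k$, which reflects exactly the repeated-difference coincidences $k=-k$ and $k-1=-(k-1)$ that are impossible for odd $n$; the verifications in this last paragraph that a chosen triple of $T_1\cup T_2$ escapes $f_{1,k}$ are routine and need only $n\ge 5$.
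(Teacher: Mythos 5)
Your proposal is correct, and the computational heart of it coincides with the paper's: both arguments come down to deciding for which $k$ two distinct triples of the orbit can share a pair, and both resolve this by enumerating coincidences among the six differences $\pm u,\pm ku,\pm(k-1)u$, arriving at exactly the lists in the statement. The packaging, however, differs in three ways. First, you reduce to $u=1$ by the relabelling $x\mapsto u^{-1}x$, which the paper does not do (it carries $u$ through the case analysis). Second, you isolate a clean standalone criterion -- the complement of the single orbit of $S=\{0,1,k\}$ is matroidal iff no two distinct translates of $S$ meet in two points, i.e.\ iff the six nonzero differences of $S$ are pairwise distinct -- whereas the paper only extracts this condition implicitly inside a contrapositive case analysis on how a failed exchange must look; your reformulation is more transparent and would apply verbatim to the complement of any single orbit in higher rank. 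Third, for the ``only if'' half the paper leans on its earlier machinery (Proposition~\ref{prop:gg2} and Theorem~\ref{thm:3d} for $k=-1,2,\tfrac{n+1}{2}$, Lemma~\ref{lem:dim3n/2} for $k=\tfrac n2,\tfrac n2+1$), while you construct explicit basis-exchange failures by hand, making that direction self-contained at the cost of a few routine verifications (the existence of the auxiliary triple $X=\{p,q,t\}$ and the non-membership of the ``escaping'' triple, which indeed only need $n\ge 5$, resp.\ $n\ge 6$ in the even case; for smaller $n$ every $k$ is excluded and the complement is empty, as you note). Two harmless loose ends: your condition ``$|S\cap(c+S)|\ge 2$'' should morally be ``$=2$'' to guarantee the translates are distinct, but the case $|S\cap(c+S)|=3$ with $c\ne 0$ forces $n=3$ and so never interferes; and the coincidence $1=-1$ (i.e.\ $n=2$) is correctly omitted. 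Neither affects the argument.
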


\begin{proof}
    To begin, assume on the contrary $\Bases = {{[n]}\choose{3}}-f_{u,ku}$ form the bases of a matroid for at least one of the cases $k=-1,k=2$, or $k=\frac{n}{2},k=\frac{n}{2}+1$ for $n$ even, or  $k=\frac{n+1}{2},|G|$ for $n$ odd. First, $k=-1,k=2$ give an immediate contradiction since $f_{u,-u}=f_{u,2u}\subseteq\Bases$ by Proposition~\ref{prop:gg2}. When $n$ is even, either of $k=\frac{n}{2},k=\frac{n}{2}+1$ give a contradiction since $f_{u,(\frac{n}{2})u}\subseteq\Bases\iff f_{u,(\frac{n}{2}+1)u}\subseteq\Bases$ by Lemma~\ref{lem:dim3n/2}. Finally when $n$ is odd, if $k=\frac{n+1}{2}$, then $2(\frac{n+1}{2})u$ is congruent to $u$ modulo $n$. Since $\frac{n+1}{2}\in\Z_n^\times$, we know by Proposition~\ref{prop:gg2} that $f_{u,(\frac{n+1}{2})u}=f_{(\frac{n+1}{2})u,u}=f_{(\frac{n+1}{2})u,2(\frac{n+1}{2})u}\subseteq\Bases$.
    
    In the opposite direction, we need to show ${{[n]}\choose{3}}-f_{u,ku}$ is a matroid provided $k\ne-1,k\ne2; k\ne\frac{n}{2}$ and $k\ne\frac{n}{2}+1$ for $n$ even; and $k\ne\frac{n+1}{2}$ for $n$ odd. We assume on the contrary that ${{[n]}\choose{3}}-f_{u,ku}$ is not a matroid, and show that $k$ has to be one of the above values. Since ${{[n]}\choose{3}}-f_{u,ku}$ is nonempty, by the strong basis exchange axiom ${{[n]}\choose{3}}-f_{u,ku}$ is not a matroid if and only if for some $X=\{x_1,x_2,x_3\}$, some $Y=\{y_1,y_2,y_3\}\in{{[n]}\choose{3}}-f_{u,ku},$ and some $i\in X-Y,$ we have that for all $j\in Y-X$, $X-i+j\in f_{u,ku}$.

    If $X$ and $Y$ share more than one element, no new bases must be included (see the discussion in Section~\ref{sec:3d1}). Thus two cases remain:
    \begin{enumerate}
        \item $X$ and $Y$ share one element; without loss of generality set $x_1=y_1$. In this case, basis exchange is not satisfied if and only if $\{x_1,x_2,y_2\},\{x_1,x_2,y_3\}\in f_{u,ku}$.
        
        \item $X$ and $Y$ are disjoint. In this case, basis exchange is not satisfied if and only if $\{x_1,x_2,y_1\},\{x_1,x_2,y_2\},\{x_1,x_2,y_3\}\in f_{u,ku}$.

    \end{enumerate}

    In the first case, we know \[\{x_1,x_2,y_2\},\{x_1,x_2,y_3\}\in f_{u,ku}=f_{-u,(k-1)u}=f_{-ku,(1-k)u}\] by Proposition~\ref{prop:dim3orbit}. Notice that the difference $x_2-x_1$ is in at least two of the following 3 sets: $\{u,ku\},\{-u,(k-1)u\},
    \{-ku,(1-k)u\}$. Since $u\in\Z_n^\times$, we can divide out by $u$ and write the 12 possibilities in the following way:
    \begin{itemize}
        \item $1=-k$ and $k=-1$ give $k=-1$.
        \item $1=-k+1$ and $-1=k-1$ give $k=0$, which isn't possible.
        \item $1=k$ and $-1=-k$ give $k=1$, which isn't possible.
        \item $1=k-1$ and $-1=-k+1$ give $k=2$.
        \item $k=-k$ gives $2k=0$; since $k\ne 0$ this case only occurs when $n$ is even and $k=\frac{n}{2}$.
        \item $k=-k+1$ and $k-1=-k$ give $2k=1$, which only occurs when $n$ is odd and $k=\frac{n+1}{2}$.
        \item $k-1=-k+1$ gives $2k=2$; since $k\ne 1$ this case only occurs when $n$ is even and $k=\frac{n}{2}+1$.
    \end{itemize}

    Thus these are the only possible values of $k$:
    
    \begin{itemize}
        \item $k=-1$
        \item $k=2$
        \item $k=\frac{n}{2}$, $|G|$ even
        \item $k=\frac{n+1}{2}$, $|G|$ odd
        \item $k=\frac{n}{2}+1$, $|G|$ even
    \end{itemize}

    In the second case case, we know \[\{x_1,x_2,y_1\},\{x_1,x_2,y_2\},\{x_1,x_2,y_3\}\in f_{u,ku}=f_{-u,(k-1)u}=f_{-ku,(1-k)u}\] by Proposition~\ref{prop:dim3orbit}. Notice that the difference $x_2-x_1\in\{u,ku\},\{-u,(k-1)u\},\text{ and }
    \{(1-k)u,-ku\}$. This is a special case of the first case, so no new values of $k$ arise.

\end{proof}

\begin{remark}
    This guarantees non-uniform matroids but does not proclude there from being other types of non-uniform matroids. Based on SAGE computations, the first example of a prime with matroids excluding more than one orbit is $\B{[\Z_{13}]}$.
\end{remark}

\subsection{Examples}
We will examine some groups of small order using our results to find all dimension 3 subrepresentations. 

\begin{example}[$\B{[\Z_6]}$]

    This example was included in \cite{GM2020}. We arrive at the same result using our methods. There are four orbits:
    \begin{align*}
        f_{1,2}=f_{1,5}=f_{4,5}=&\{\{0,1,2\},\{1,2,3\},\{2,3,4\},\{3,4,5\},\{0,4,5\},\{0,1,5\}\}\\
        f_{1,3}=f_{2,5}=f_{3,4}=&\{\{0,1,3\},\{1,2,4\},\{2,3,5\},\{0,3,4\},\{1,4,5\},\{0,2,5\}\}\\
        f_{1,4}=f_{2,3}=f_{3,5}=&\{\{0,1,4\},\{1,2,5\},\{0,2,3\},\{1,3,4\},\{2,4,5\},\{0,3,5\}\}\\
        f_{2,4}=&\{\{0,2,4\},\{1,3,5\}\}.
    \end{align*}
    We know by Theorem~\ref{thm:3d} that $f_{1,2}=f_{4,5}\subseteq\Bases$ for every matroid. By Proposition~\ref{prop:dim3exchange}, $f_{1,2}\subseteq\Bases\implies f_{1,4}$ or $f_{2,4}\subseteq\Bases$. Furthermore, by Lemma~\ref{lem:dim3n/2}, $f_{1,3}\subseteq\Bases\iff f_{1,4}\subseteq\Bases$. It can be checked that the following are the only matroidal unions of orbits.
    \begin{align*}
    f_{1,2}&\cup f_{2,4}\\
    f_{1,2}&\cup f_{1,3}\cup f_{1,4}\\
    f_{1,2}&\cup f_{1,3}\cup f_{1,4}\cup f_{2,4}
    \end{align*}    
\end{example}

Note that the first matroid corresponds to the subgroup with index $3$ in $\Z_6$ as in Theorem~\ref{thm:dim3subgroups}.

\begin{example}[$\B{[\Z_7]}$]

    The orbits in this example are:
    \[f_{1,2}=f_{1,6}=f_{5,6}=\{\{0,1,2\},\{1,2,3\},\{2,3,4\},\{3,4,5\},\{4,5,6\},\{0,5,6\},\{0,1,6\}\}\]
    \[f_{1,3}=f_{2,6}=f_{4,5}=\{\{0,1,3\},\{1,2,4\},\{2,3,5\},\{3,4,6\},\{0,4,5\},\{1,5,6\},\{0,2,6\}\}\]
    \[f_{1,4}=f_{3,4}=f_{3,6}=\{\{0,1,4\},\{1,2,5\},\{2,3,6\},\{0,3,4\},\{1,4,5\},\{2,5,6\},\{0,3,6\}\}\]
    \[f_{1,5}=f_{2,3}=f_{4,6}=\{\{0,1,5\},\{1,2,6\},\{0,2,3\},\{1,3,4\},\{2,4,5\},\{3,5,6\},\{0,4,6\}\}\]
    \[f_{2,4}=f_{2,5}=f_{3,5}=\{\{0,2,4\},\{1,3,5\},\{2,4,6\},\{0,3,5\},\{1,4,6\},\{0,2,5\},\{1,3,6\}\}.\]
    By Theorem~\ref{thm:3d}, $f_{1,2},\ f_{2,4}=f_{3,5}$, and $f_{3,6}=f_{1,4}\subseteq\Bases$. Again, basis exchange can be used to show that $f_{1,2}\text{ and } f_{2,4}\subseteq\Bases\implies f_{1,3}$ or $f_{1,5}\subseteq\Bases$. It can be checked that the following are the only matroidal unions of orbits.
    \begin{align*}
      f_{1,2}&\cup f_{1,3}\cup f_{1,4}\cup f_{2,4}\\
      f_{1,2}&\cup f_{1,4}\cup f_{1,5}\cup f_{2,4}\\
      f_{1,2}&\cup f_{1,3}\cup f_{1,4}\cup f_{1,5}\cup f_{2,4}
    \end{align*}
\end{example}
    
\begin{example}[$\B{[S_3]}$]
We use the presentation $S_3=\langle \rho, \sigma | \rho^3=\sigma^2=e$, $\sigma\rho\sigma=\rho^{-1}\rangle$. The orbits of the left action of $S_3$ on ${S_3}\choose{3}$ are as follows.
    \begin{align*}
        f_{\rho,\rho^2}=&\{\{e,\rho,\rho^2\},\{\sigma,\sigma\rho,\sigma\rho^2\}\}\\
        f_{\rho,\sigma}=f_{\rho^2,\sigma\rho}=f_{\sigma,\sigma\rho}=&\{\{e,\rho,\sigma\},\{\rho,\rho^2,\sigma\rho^2\},\{e,\rho^2,\sigma\rho\},\{e,\sigma,\sigma\rho\},\{\rho^2,\sigma\rho,\sigma\rho^2\},\{\rho,\sigma,\sigma\rho^2\}\}\\f_{\rho,\sigma\rho}=f_{\rho^2,\sigma\rho^2}=f_{\sigma\rho,\sigma\rho^2}=&\{\{e,\rho,\sigma\rho\},\{\rho,\rho^2,\sigma\},\{e,\rho^2,\sigma\rho^2\},\{\rho,\sigma,\sigma\rho\},\{e,\sigma\rho,\sigma\rho^2\},\{\rho^2,\sigma,\sigma\rho^2\}\}\\f_{\rho,\sigma\rho^2}=f_{\rho^2,\sigma}=f_{\sigma,\sigma\rho^2}=&\{\{e,\rho,\sigma\rho^2\},\{\rho,\rho^2,\sigma\rho\},\{e,\rho^2,\sigma\},\{\rho^2,\sigma,\sigma\rho\},\{\rho,\sigma\rho,\sigma\rho^2\},\{e,\sigma,\sigma\rho^2\}\}
    \end{align*}

Based on SAGE computations, there are 5 matroidal sums of orbits.
    \begin{align*}
      f_{\rho,\rho^2}&\cup f_{\rho,\sigma}=f_{S_3-<\sigma\rho^2>}\\
      f_{\rho,\rho^2}&\cup f_{\rho,\sigma\rho}=f_{S_3-<\sigma>}\\
      f_{\rho,\rho^2}&\cup f_{\rho,\sigma\rho^2}=f_{S_3-<\sigma\rho>}\\
      f_{\rho,\sigma}&\cup f_{\rho,\sigma\rho}\cup f_{\rho,\sigma\rho^2}\\
      f_{\rho,\rho^2}&\cup f_{\rho,\sigma}\cup f_{\rho,\sigma\rho}\cup f_{\rho,\sigma\rho^2}\text{ (uniform)}
    \end{align*}

\end{example}
\bibliographystyle{amsalpha}
\bibliography{matroid}
\nocite{*}

\end{document}